\newcommand{\Z}{\mathbb{Z}}
\newcommand{\C}{\mathbb{C}}
\newcommand{\SL}{{\text {\rm SL}}}
\def\H{\mathbb{H}}
\newcommand{\leg}[2]{\genfrac{(}{)}{}{}{#1}{#2}}
\newcommand{\ov}{\overline}
\newcommand{\cal}{\mathcal}
\newtheorem{theorem}{Theorem}[section]
\newtheorem{corollary}[theorem]{Corollary}
\newtheorem{proposition}[theorem]{Proposition}
\newtheorem*{theorem*}{Theorem}
\newtheorem{remark}[theorem]{Remark}
\numberwithin{equation}{section}
\title{Rank and crank moments for overpartitions}
\date{\today}
\author{Kathrin Bringmann, Jeremy Lovejoy, and Robert Osburn}
\address{School of Mathematics, University of Minnesota, Minneapolis, MN 55455, U. S. A.}
\address{CNRS, LIAFA, Universit\'e Denis Diderot,
2, Place Jussieu, Case 7014, F-75251 Paris Cedex 05, FRANCE}
\address{School of Mathematical Sciences, University College Dublin, Belfield, Dublin 4, Ireland}
\email{brigman@math.umn.edu}
\email{lovejoy@liafa.jussieu.fr}
\email{robert.osburn@ucd.ie}
\thanks{The first author was partially supported by NSF grant DMS-0757907 while
 the second and third authors were partially supported by a PHC
Ulysses grant. Part of this paper was written while the first author was in residence at the
Max-Planck Institute. She thanks the institute for providing a stimulating environment.}
\subjclass[2000]{Primary: 11F11, 11P83; Secondary: 05A19}
\begin{document}
\begin{abstract}
We study two types of crank moments and two types of rank moments
for overpartitions.  We show that the crank moments and their
derivatives, along with certain linear combinations of the rank
moments and their derivatives, can be written in terms of
quasimodular forms. We then use this fact to prove exact relations
involving the moments as well as congruence properties modulo $3$,
$5$, and $7$ for some combinatorial functions which may be
expressed in terms of the second moments.  Finally, we establish a
congruence modulo $3$ involving one such combinatorial function
and the Hurwitz class number $H(n)$.

\end{abstract}

\maketitle

\section{Introduction}
Dyson's rank of a partition is the largest part minus the number
of parts \cite{dyson}.
Let $N(m,n)$ denote the number of partitions of $n$ whose rank is
$m$.
The Andrews-Garvan crank is either the
largest part, if $1$ does not occur, or the difference between the
number of parts larger than the number of $1$'s and the number of
$1$'s, if $1$ does occur \cite{AG}.
  For $n \neq 1$ let $M(m,n)$ denote the number of partitions
of $n$ whose crank is $m$. Even though there is only one partition
of one, for technical reasons we set $M(0,1) = -1$, $M(-1,1) =
M(1,1) = 1$, and $M(m,1) = 0$ otherwise.  Then the {\it $k$th rank
moment} $N_k(n)$ and the {\it $k$th crank moment} $M_k(n)$ are given by
\begin{equation}
N_k(n) := \sum_{m \in \mathbb{Z}} m^k N(m,n),
\end{equation}
and
\begin{equation}
M_k(n) :=  \sum_{m \in \mathbb{Z}} m^k M(m,n).
\end{equation}

Since their introduction by Atkin and Garvan \cite{atkgar}, the
rank and crank moments and their linear combinations have been the
subject of a number of works \cite{An1,An2,Br1,bgm,Fo-On1,Ga1}.
A key role in several of these studies is played by the fact that
the crank moments and their derivatives, along with a specific
linear combination of the rank moments and their derivatives, can
be expressed in terms of quasimodular forms. Here we shall see
that this holds in the case of overpartitions as well.

Recall that an \textit{overpartition} \cite{Co-Lo1} is a partition in which
the first occurrence of each distinct number may be overlined. For
example, the $14$ overpartitions of $4$ are
\begin{equation} \label{pbarof4}
\begin{gathered}
4, \overline{4}, 3+1, \overline{3} + 1, 3 + \overline{1},
\overline{3} + \overline{1}, 2+2, \overline{2}
+ 2, 2+1+1, \overline{2} + 1 + 1, 2+ \overline{1} + 1, \\
\overline{2} + \overline{1} + 1, 1+1+1+1, \overline{1} + 1 + 1 +1.
\end{gathered}
\end{equation}
We denote by $\ov{P}$ the generating function for overpartitions
(throughout $q=e^{2 \pi i \tau}$ and $\tau=x + iy$ with $x$, $y \in \mathbb{R}$)
\cite{Co-Lo1},
$$
\ov{P}=\ov{P} (q)= \prod_{n \geq 1} \frac{(1+q^n)}{(1-q^n)}.
$$

The case of overpartitions is somewhat different from that of
partitions.  First, there are two distinct ranks of interest:
Dyson's rank and the $M2$-rank \cite{Lo1}.  The $M2$-rank is a bit
more complicated than Dyson's rank.  We use the notation
$\ell(\cdot)$ to denote the largest part of an object, $n(\cdot)$
to denote the number of parts, and $\lambda_o$ for the
subpartition of an overpartition consisting of the odd
non-overlined parts.  Then the $M2$-rank of an overpartition
$\lambda$ is
$$
\text{$M2$-rank}(\lambda) := \bigg \lceil \frac{\ell(\lambda)}{2}
\bigg \rceil - n(\lambda) + n(\lambda_o) - \chi(\lambda),
$$
where $\chi(\lambda) = 1$ if the largest part of $\lambda$ is odd
and non-overlined and $\chi(\lambda) = 0$ otherwise.

Let $\ov{N}(m,n)$ (resp. $\ov{N2}(m,n)$) denote the number of
overpartitions of $n$ whose rank (resp. $M2$-rank) is $m$.  We
define the rank moments $\ov{N}_k(n)$ and $\ov{N2}_k(n)$, along
with their generating functions $\ov{R}_k$ and $\ov{R2}_k$, by
\begin{equation}
\ov{R}_k=\ov{R}_k (q):= \sum_{n \geq 0}\ov{N}_k(n)q^n :=  \sum_{n \geq
0}\left( \sum_{m \in \mathbb{Z}} m^k \ov{N}(m,n)\right) q^n,
\end{equation}
and
\begin{equation}
\ov{R2}_k=\ov{R2}_k (q):= \sum_{n \geq 0}\ov{N2}_k(n)q^n :=  \sum_{n \geq 0}
\left( \sum_{m \in \mathbb{Z}} m^k \ov{N2}(m,n) \right) q^n.
\end{equation}
We note that in light of the symmetries $\ov{N}(m,n) =
\ov{N}(-m,n)$ \cite{Lo.5} and $\ov{N2}(m,n) = \ov{N2}(m,n)$
\cite{Lo1}, we have $\ov{R}_k= \ov{R2}_k = 0$ when $k$ is odd.

The second difference between partitions and overpartitions is
that in the latter case no notion of crank has been defined.
Indeed, the crank for partitions arose because of its relation to
Ramanujan's congruences, and Choi has shown that no such
congruences exist for overpartitions \cite{Ch1}.  What we will be
required to consider are two ``residual cranks".  The first
residual crank of an overpartition is obtained by taking the crank
of the subpartition consisting of the non-overlined parts.  The
second residual crank is obtained by taking the crank of the
subpartition consisting of all of the even non-overlined parts
divided by two.

Let $\ov{M}(m,n)$ (resp. $\ov{M2}(m,n)$) denote the number of
overpartitions of $n$ with first (resp. second) residual crank
equal to $m$.  Here we make the appropriate modifications based on
the fact that for partitions we have $M(0,1) = -1$ and $M(-1,1) =
M(1,1) = 1$.  For example, the overpartition $\ov{7} + \ov{5} +
\ov{2} + 1$ contributes a $-1$ to the count of $\ov{M}(0,15)$ and
a $+1$ to $\ov{M}(-1,15)$ and $\ov{M}(1,15)$.  Define the crank
moments $\ov{M}_k(n)$ and $\ov{M2}_k(n)$, along with their
generating functions $\ov{C}_k$ and $\ov{C2}_k$, by
\begin{equation}
\ov{C}_k=\ov{C}_k(q) := \sum_{n \geq 0} \ov{M}_k(n)q^n :=  \sum_{n \geq 0}
\left( \sum_{m \in \mathbb{Z}} m^k \ov{M}(m,n) \right) q^n,
\end{equation}
and
\begin{equation}
\ov{C2}_k=\ov{C2}_k(q) := \sum_{n \geq 0} \ov{M2}_k(n) q^n :=  \sum_{n \geq 0}
\left(\sum_{m \in \mathbb{Z}} m^k \ov{M2}(m,n)\right) q^n.
\end{equation}
As with the rank moments, the crank moments turn out to be $0$ for
$k$ odd (see \eqref{Cofzq} and \eqref{calCofzq}).

We are now ready to state the quasimodularity properties of the
rank and crank moments for overpartitions.

\begin{theorem} \label{main1}
For $k \geq 1$ let $\ov{\cal{W}}_{k}$ denote the space of
quasimodular forms on $\Gamma_0(2)$ of weight at most $2k$ having
no constant term. The following functions are in $\ov{P} \cdot
\ov{\cal{W}}_{k}$:
\begin{itemize}
\item[$(i)$]
The functions in
$$
\ov{\cal{C}}_k := \left\{\delta_q^m\left(\ov{C}_{2j}\right) : m \geq 0,\,  1 \leq j \leq k, \,
 j+m
\leq k \right\},
$$
\item[$(ii)$]
 the functions in
$$
\ov{\cal{C}2}_k := \left\{\delta_q^m\left(\ov{C2}_{2j}\right) : m \geq 0,\,
1 \leq j \leq k\, ,
j+m \leq k\right\},
$$
\item[$(iii)$]
for $a = 2k$ the function
$$
\begin{gathered}
\left(a^2-3a+2\right)\overline{R}_a +
 2\sum_{i=1}^{a/2-1} {a \choose
2i}\left(3^{2i} - 2^{2i} - 1\right)\delta_q\ov{R}_{a-2i} \\ +
\sum_{i=1}^{a/2-1} \left({a \choose 2i}(2^{2i}+1) + 2{a \choose
2i+1}\left(1-2^{2i+1}\right)
+ \frac{1}{2}{a \choose 2i+2}
\left(3^{2i+2} -
2^{2i+2} - 1\right) \right) \ov{R}_{a-2i},
\end{gathered}
$$
\item[$(iv)$]
for $a=2k$ the function
$$
\begin{gathered}
\left(a^2-3a+2\right)\overline{R2}_a + \frac{1}{2}\sum_{i=1}^{a/2-1} {a
\choose 2i}
\left(3^{2i} - 2^{2i} - 1\right)\delta_q\ov{R2}_{a-2i} \\ +
\sum_{i=1}^{a/2-1} \left({a \choose 2i}(2^{2i}+1) + 2{a \choose
2i+1}\left(1-2^{2i+1}\right) + \frac{1}{2}{a \choose 2i+2}
\left(3^{2i+2} -
2^{2i+2} - 1\right) \right) \ov{R2}_{a-2i}.
\end{gathered}
$$
\end{itemize}
\end{theorem}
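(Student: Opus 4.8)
The plan is to establish the crank statements (i)--(ii) first, via a logarithmic-derivative computation, and then to deduce the rank statements (iii)--(iv) by the overpartition analogue of Atkin and Garvan's rank--crank argument.

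For (i) and (ii) I would begin with the product expansions of the two-variable crank generating functions. Decomposing an overpartition into its overlined part (a partition into distinct parts, with generating function $\prod_{n\ge1}(1+q^n)$) and the subpartition of non-overlined parts, and applying the Andrews--Garvan crank product to the latter --- the conventions $M(0,1)=-1$, $M(\pm1,1)=1$ being exactly what make this an identity --- gives
\[
\ov{C}(z,q)=\ov{P}(q)\prod_{n\ge1}\frac{(1-q^n)^2}{(1-zq^n)(1-z^{-1}q^n)},
\]
and applying the same decomposition to the even non-overlined parts divided by $2$ gives
\[
\ov{C2}(z,q)=\ov{P}(q)\prod_{n\ge1}\frac{(1-q^{2n})^2}{(1-zq^{2n})(1-z^{-1}q^{2n})}.
\]
Setting $z=e^v$ and $\delta_z:=z\,\partial_z=\partial_v$, I would take a logarithm and differentiate in $v$: the $\ov{P}$ factor and the numerators are $z$-free and drop out, and a short computation shows that all odd $z$-derivatives at $v=0$ vanish, while $\delta_z^{2j}\log(\ov{C}/\ov{P})\,|_{v=0}=2\sum_{N\ge1}\sigma_{2j-1}(N)q^N$, with $q^N$ replaced by $q^{2N}$ in the $\ov{C2}$ case. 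These are, up to rational scalars, the Eisenstein series $E_{2j}(\tau)$ respectively $E_{2j}(2\tau)$ (and $E_2$ in the case $2j=2$) with their constant terms subtracted off, hence quasimodular on $\Gamma_0(2)$ of weight $2j$ with no constant term. Exponentiating, $\ov{C}(e^v,q)=\ov{P}(q)\exp\big(\sum_{j\ge1}\beta_{2j}(q)v^{2j}/(2j)!\big)$ with $\beta_{2j}$ these forms, so $\ov{C}_{2n}(q)=(2n)!\,\ov{P}(q)\,[v^{2n}]\exp(\cdots)$; the coefficient is a Bell polynomial in $\beta_2,\dots,\beta_{2n}$, homogeneous of weight $2n$ with no constant term. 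Thus $\ov{C}_{2n}=\ov{P}\cdot P_n$ with $P_n\in\ov{\cal{W}}_k$ for $n\le k$, and likewise $\ov{C2}_{2n}=\ov{P}\cdot\widetilde{P}_n$; this also re-proves the vanishing of the odd crank moments.

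It remains, for (i) and (ii), to deal with the operators $\delta_q$. Since $\delta_q$ is a derivation and $\delta_q\ov{P}=\ov{P}\cdot\phi$ with $\phi:=\delta_q\log\ov{P}=2\sum_{n\ge1}nq^n/(1-q^{2n})$ a weight-$2$ quasimodular form on $\Gamma_0(2)$ with no constant term (a linear combination of $E_2(\tau)$ and $E_2(2\tau)$, constant term removed), we get $\delta_q(\ov{P}g)=\ov{P}(\phi g+\delta_q g)$, and $\phi g+\delta_q g$ is again quasimodular on $\Gamma_0(2)$, of weight at most two more than that of $g$, with no constant term. Iterating $m$ times starting from $g=P_n$ shows that $\delta_q^m\ov{C}_{2n}$ equals $\ov{P}$ times a quasimodular form on $\Gamma_0(2)$ of weight at most $2n+2m$ with no constant term, hence lies in $\ov{P}\cdot\ov{\cal{W}}_k$ exactly when $n+m\le k$ --- precisely the range in the theorem --- and the same argument gives (ii).

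For (iii) and (iv) the plan is to imitate the rank--crank argument for ordinary partitions. The input is the known $q$-series representations of $\ov{R}(z,q)$ and $\ov{R2}(z,q)$ (due to Lovejoy, and to Bringmann--Lovejoy), which exhibit each, after dividing off the factor $\ov{P}(q)=\ov{R}(1,q)=\ov{R2}(1,q)$, as an Appell--Lerch-type sum normalized to equal $1$ at $z=1$ --- the overpartition analogue of the classical identity $R(z,q)=\big(\prod_{n\ge1}(1-q^n)^{-1}\big)\sum_{n\in\Z}(-1)^n(1-z)q^{n(3n+1)/2}/(1-zq^n)$. Expanding in $v=\log z$ at $v=0$: the $n=0$ summand has a simple pole cancelled by the $1-z$ factor, and the remaining $z$-Taylor coefficients split into a quasimodular (Eisenstein) part and a ``mock'' part --- built from derivatives of Appell--Lerch sums --- whose successive coefficients are tied together by a first-order differential recursion in $q$; this recursion is the reason $\delta_q$ appears in the statement. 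The explicit constants $a^2-3a+2$, $2\binom{a}{2i}(3^{2i}-2^{2i}-1)$, and so on, arise from expanding the factors $(1-z)^{\pm1}$, $(1-zq^n)^{-1}$ --- and, for the $M2$-rank, $(1-zq^{2n})^{-1}$, which is what divides the coefficient of $\delta_q\ov{R2}_{a-2i}$ by $4$ relative to $\delta_q\ov{R}_{a-2i}$ --- in powers of $v$ against the quadratic exponents in the two sums, and these constants are chosen precisely so that the mock part cancels, leaving $\ov{P}$ times a quasimodular form on $\Gamma_0(2)$ of weight at most $2k$ with no constant term. I expect this last point to be the main obstacle: locating the precise $q$-series for both overpartition ranks, carrying out the $2k$-fold differentiation in $z$ at a point where one of the summands is singular (a Fa\`a di Bruno bookkeeping), and checking that the combination written down in the theorem --- and no simpler one --- kills the non-quasimodular contribution.
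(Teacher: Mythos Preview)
Your treatment of (i) and (ii) is correct and is essentially the paper's argument: the paper factors $\ov{C}(z;q)=(-q;q)_\infty C(z;q)$ and cites Atkin--Garvan for $\delta_z^{j} C(z;q)\big|_{z=1}\in P\cdot\cal{W}_j$, whereas you re-derive that fact via the logarithmic derivative; the $\delta_q$-stability step via $\delta_q\log\ov{P}\in\ov{\cal{W}}_1$ is identical.

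For (iii) and (iv), however, your plan is not a proof, and it is not the paper's route. The paper does not expand $\ov{R}(z;q)$ as an Appell--Lerch sum and then search for a combination that kills the mock piece. It invokes a ready-made partial differential equation from the companion paper \cite{Br-Lo-O},
\[
z(1+z)\frac{(q)_\infty^2}{(-q)_\infty}\,C(z;q)^3\,(-zq;q)_\infty(-q/z;q)_\infty
=\Bigl(2(1-z)^2(1+z)\delta_q + z(1+z) + 2z(1-z)\delta_z + \tfrac12(1+z)(1-z)^2\delta_z^2\Bigr)\ov{R}(z;q),
\]
together with an analogous PDE in $C(z;q^2)$ and $\ov{R2}(z;q)$. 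One applies $\delta_z^{a}$ to both sides and sets $z=1$. The right-hand side then \emph{produces} the linear combination in the theorem automatically: the constants $3^{2i}-2^{2i}-1$, $2^{2i}+1$, $1-2^{2i+1}$ are nothing more than $\delta_z^{j}\big|_{z=1}$ applied to the polynomial coefficients $(1-z)^2(1+z)=1-z-z^2+z^3$, $z+z^2$, $2z-2z^2$ of the operator (and the factor of four between the $\delta_q$-coefficients in (iii) and (iv) comes from the different numerical constants in the two PDEs, not from $(1-zq^{2n})^{-1}$ as you guess). The left-hand side is then placed in $\ov{P}\cdot\ov{\cal{W}}_k$ using part (i), a short computation of $\delta_z^{j}\{(-zq;q)_\infty(-q/z;q)_\infty\}\big|_{z=1}$ in terms of Eisenstein-type sums, and a single cancellation of the stray $(2^a+1)\ov{P}$ term. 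Your ``cancel the mock part'' heuristic is morally what the PDE encodes, but without the PDE you have only a programme, not an argument --- as you yourself acknowledge when you call it the main obstacle.
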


It turns out that for $k=2,3$, and $4$ the number of functions
above exceeds the dimension of $\ov{\cal{W}}_{k}$, which implies
relations among these functions.  In Corollaries
\ref{cor1}--\ref{cor3}, we compute several such relations.  This
is the same approach taken by Atkin and Garvan in their study of
rank and crank moments of partitions \cite{atkgar}.

Then we show how Theorem \ref{main1} can be used to deduce
congruence properties for combinatorial functions which can be
expressed in terms of second rank and crank moments. First, let
$nov(n)$ (resp. $ov(n)$) denote the sum, over all overpartitions
of $n$, of the non-overlined (resp. overlined) parts.  For
example, \eqref{pbarof4} shows that $ov(4) = 21$ and $nov(4) =
35$. The generating functions of $nov(n)$ and $ov(n)$ are given by (see Section 4)
\begin{eqnarray}
Nov(q)&:=& \sum_{n=0}^{\infty} nov(n)\, q^n= \ov{P}
\sum_{n=1}^{\infty} \frac{n\, q^n}{1-q^n}, \label{Nov} \\
Ov(q)&:=& \sum_{n=0}^{\infty} ov(n)\, q^n
= \ov{P}
\sum_{n=1}^{\infty} \frac{n\, q^n}{1+q^n}. \label{Ov}
\end{eqnarray}

\begin{theorem} \label{main2} We have
\begin{equation} \label{cong2}
(n+2)nov(n) \equiv (n^2+4n+3)ov(n) \pmod{5},
\end{equation}
and
\begin{equation} \label{cong3}
(n^2+1)nov(n) \equiv (4n^3-n^2-1)ov(n) \pmod{7}.
\end{equation}







\end{theorem}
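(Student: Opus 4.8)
The plan is to rewrite each congruence as a coefficientwise identity of $q$-series modulo the prime in question, to exhibit both sides as $\ov{P}$ times an explicit quasimodular form on $\Gamma_0(2)$, and then to conclude from the classical congruences satisfied by the Eisenstein series together with a Sturm-type bound. The first step is to absorb the polynomial-in-$n$ factors into the operator $\delta_q=q\,\tfrac{d}{dq}$: for any power series $f=\sum_n a_nq^n$ one has $\sum_n(n+2)a_nq^n=(\delta_q+2)f$, $\sum_n(n^2+4n+3)a_nq^n=(\delta_q+1)(\delta_q+3)f$, $\sum_n(n^2+1)a_nq^n=(\delta_q^2+1)f$, and $\sum_n(4n^3-n^2-1)a_nq^n=(4\delta_q^3-\delta_q^2-1)f$. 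Thus \eqref{cong2} becomes the assertion that $(\delta_q+2)Nov\equiv(\delta_q+1)(\delta_q+3)Ov\pmod 5$, and \eqref{cong3} the assertion that $(\delta_q^2+1)Nov\equiv(4\delta_q^3-\delta_q^2-1)Ov\pmod 7$.

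Second, I would make the quasimodular structure explicit. From \eqref{Nov}, \eqref{Ov} and the standard evaluations $\sum_{n\geq1}\frac{nq^n}{1-q^n}=\frac{1-E_2(\tau)}{24}$ and $\sum_{n\geq1}\frac{nq^n}{1+q^n}=\frac{2E_2(2\tau)-E_2(\tau)-1}{24}$ one obtains $Nov=\ov{P}\cdot g$ and $Ov=\ov{P}\cdot h$ with $g,h$ quasimodular of weight $2$ on $\Gamma_0(2)$; indeed $Nov=\tfrac12\ov{C}_2$ and $Ov=\tfrac12\ov{C}_2-\ov{C2}_2$, so that the membership statements of Theorem~\ref{main1}$(i)$--$(ii)$ are exactly what guarantees that all the derivatives $\delta_q^m$ of these that we apply still lie in $\ov{P}\cdot\ov{\mathcal{W}}_k$. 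Using the product rule for $\delta_q$, the identity $\delta_q\ov{P}=\ov{P}\cdot\frac{E_2(2\tau)-E_2(\tau)}{12}$, and Ramanujan's formulas $\delta_qE_2=\frac{E_2^2-E_4}{12}$, $\delta_qE_4=\frac{E_2E_4-E_6}{3}$, $\delta_qE_6=\frac{E_2E_6-E_4^2}{2}$ (together with the evident analogues at $2\tau$), I would compute $(\delta_q+2)Nov=\ov{P}\cdot W_5$ and $(\delta_q+1)(\delta_q+3)Ov=\ov{P}\cdot W_5'$ for explicit quasimodular forms $W_5,W_5'$ on $\Gamma_0(2)$ of weight at most $6$, each a polynomial in $E_2(\tau),E_2(2\tau),E_4(\tau),E_4(2\tau),E_6(\tau),E_6(2\tau)$, and similarly produce $W_7,W_7'$ of weight at most $8$ for \eqref{cong3}. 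Since $\ov{P}=1+2q+\cdots$ is a unit in $\Z_{(p)}[[q]]$, the congruence \eqref{cong2} is equivalent to $W_5\equiv W_5'\pmod 5$ and \eqref{cong3} to $W_7\equiv W_7'\pmod 7$.

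Third, I would reduce modulo the prime and finish with Sturm's bound. One has $E_2\equiv E_6$ and $E_4\equiv1\pmod5$, and $E_2\equiv E_8$ and $E_6\equiv1\pmod7$ (the first congruence of each pair because $d^{\,p}\equiv d$ forces $\sigma_p(n)\equiv\sigma_1(n)$ and the leading coefficients then agree modulo $p$, the second because $p$ divides the denominator of $B_{p-1}$). Substituting these into $W_p-W_p'$ replaces the quasimodular generator $E_2$ by a genuine modular form and so exhibits $W_p-W_p'$ modulo $p$ as a sum of true modular forms on $\Gamma_0(2)$; grouping by weight modulo $p-1$ and homogenizing with $E_{p-1}\equiv1$, the vanishing of $W_p-W_p'$ modulo $p$ is implied by the vanishing of boundedly many of its low-order $q$-coefficients, by the Sturm bound on $\Gamma_0(2)$ (which has index $3$ in $\mathrm{SL}_2(\Z)$). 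Carrying out this finite verification from the tabulated values of $nov(n)$ and $ov(n)$ then completes the proof; should $W_p-W_p'$ happen to vanish already as a polynomial in the Eisenstein generators after the substitution, no verification at all would be needed.

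The step I expect to be the main obstacle is the explicit evaluation of $W_p$ and $W_p'$: the iterated application of $\delta_q$ to the product $\ov{P}\cdot(\text{Eisenstein series})$ must be organized so as to keep the weights — and hence the length of the concluding finite check — as small as possible, and one must keep careful track of the $2\tau$-arguments throughout. By contrast the reductions $E_2\mapsto E_{p+1}$ and $E_{p-1}\mapsto1$ and the Sturm argument are routine, the only delicate point being that $W_p-W_p'$ is in general inhomogeneous, so the collapse of the weight grading modulo $p$ must be handled with a little care.
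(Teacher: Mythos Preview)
Your approach is correct but genuinely different from the paper's. The paper does \emph{not} use Sturm's bound or the Eisenstein congruences $E_{p-1}\equiv 1$, $E_2\equiv E_{p+1}\pmod p$ at all. Instead, it relies on the exact rank--crank relations of Corollaries~\ref{cor1}--\ref{cor3}, which come from parts $(iii)$ and $(iv)$ of Theorem~\ref{main1}, i.e.\ ultimately from the partial differential equation \eqref{pdenostar} linking the rank and crank generating functions. For \eqref{cong2} the paper simply multiplies the relation \eqref{k3onebis} by $5$: the rank terms then vanish modulo $5$ and the surviving crank terms give a congruence between $\ov{M}_2(n)$ and $\ov{M2}_2(n)$, which translates immediately to $nov$ and $ov$ via Proposition~\ref{prop1}. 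For \eqref{cong3} two such relations are combined, with $\ov{M}_4(n)$ eliminated between them.

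The trade-offs: the paper's route explains \emph{why} these particular polynomial-in-$n$ coefficients arise (they are the residues of the rational coefficients in the exact relations) and needs no coefficient check beyond computing those relations; it also feeds directly into the $spt$-congruences of Theorem~\ref{main3}. Your route is more elementary in that it never invokes the rank moments or the PDE --- it uses only the quasimodularity of $\ov{C}_2$ and $\ov{C2}_2$ from parts $(i)$--$(ii)$ --- but it trades that for an explicit Eisenstein computation and a finite Sturm verification on $\Gamma_0(2)$. Both are valid; the paper's method is the more structural one here.
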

Notice that congruences like \eqref{cong2} and \eqref{cong3} imply
simpler
congruences in arithmetic progressions for $ov(n)$ and $nov(n)$
modulo $5$ and $7$.

Next, let $\ov{spt1}(n)$ (resp. $\ov{spt2}(n)$) denote the sum,
over all overpartitions $\lambda$ of $n$, of the number of
occurrences of the smallest part of $\lambda$, provided this
smallest part is odd (resp. even).  Let $\ov{spt}(n)$ be the sum
of these two functions.  For example, using \eqref{pbarof4} we
have $\ov{spt1}(4) = 20$, $\ov{spt2}(4) = 6$, and $\ov{spt}(4) =
26$. When the overpartition has no overlined parts, $\ov{spt}(n)$
reduces to Andrews' smallest parts function $spt(n)$
\cite{An2,Fo-On1,Ga1}. The functions $\overline{spt2}(n)$ and
$\overline{spt}(n)$ can be easily computed using  (\ref{this}) and
(\ref{this2}).


\begin{theorem} \label{main3}
We have
\begin{equation} \label{cong4}
\ov{spt2}(3n) \equiv \ov{spt2}(3n+1) \equiv 0 \pmod{3},
\end{equation}
\begin{equation} \label{cong5}
\ov{spt}(3n) \equiv 0 \pmod{3},
\end{equation}
\begin{equation} \label{cong6}
\ov{spt2}(5n+3) \equiv 0 \pmod{5},
\end{equation}
and
\begin{equation} \label{cong7}
\ov{spt1}(5n) \equiv 0 \pmod{5}.
\end{equation}
\end{theorem}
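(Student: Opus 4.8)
The plan is to derive all four congruences from Theorem \ref{main1}, specifically from the $k=2$ case of parts $(i)$, $(ii)$, and $(iv)$, by first expressing the combinatorial functions $\ov{spt1}(n)$, $\ov{spt2}(n)$, and $\ov{spt}(n)$ in terms of second rank and crank moments. First I would establish the generating function identities referenced as \eqref{this} and \eqref{this2}: by a standard manipulation of the two-variable generating functions for overpartitions counted by smallest part (as in Andrews' work on $spt(n)$ and its analogues), one shows that the smallest-parts functions are given by combinations such as $\sum_n \ov{spt2}(n) q^n = \tfrac12(\ov{C2}_2 - \ov{R2}_2)$ and similarly for $\ov{spt1}$, with $\ov{spt} = \ov{spt1}+\ov{spt2}$ assembling into an expression involving $\ov{C}_2$, $\ov{R}_2$, $\ov{C2}_2$, $\ov{R2}_2$. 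The key input is the classical fact that the difference between the second crank moment and the second rank moment of a partition-type object equals (twice) the relevant smallest-parts generating function; the overpartition versions follow the same pattern and are recorded in Section~4.

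Next I would invoke Theorem \ref{main1} with $k=2$. Part $(i)$ gives $\ov{C}_2 \in \ov{P}\cdot\ov{\mathcal W}_2$, part $(ii)$ gives $\ov{C2}_2 \in \ov{P}\cdot\ov{\mathcal W}_2$, and part $(iv)$ with $a=4$ gives a specific linear combination of $\ov{R2}_4$, $\delta_q\ov{R2}_2$, and $\ov{R2}_2$ lying in $\ov{P}\cdot\ov{\mathcal W}_2$ (and part $(iii)$ the analogue for the Dyson rank). Since the moments $\ov{R2}_4$ and $\delta_q\ov{R2}_2$ are not individually controlled, I would use the $k=2,3,4$ relations from Corollaries \ref{cor1}--\ref{cor3} — which come precisely from the excess of functions over $\dim\ov{\mathcal W}_k$ — to solve for $\ov{R2}_2$ (and $\ov{R}_2$) modulo the appropriate primes as an explicit quasimodular form times $\ov{P}$, up to lower-order terms that vanish mod $3$, $5$, or $7$. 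Concretely, one arrives at congruences of the shape $2\,\sum_n \ov{spt2}(n)q^n \equiv (\text{explicit eta-quotient / Eisenstein combination}) \pmod{p}$.

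The final step is a sieving argument on the resulting modular object. For \eqref{cong4} and \eqref{cong5} (mod $3$): after reducing mod $3$, the relevant generating function should collapse, using $\ov{P}(q)\equiv \ov{P}(q)$ and the mod-$3$ behavior of the weight-$2$ quasimodular forms $E_2(\tau)$, $E_2(2\tau)$, to something like a power of $\eta$-products whose exponents are divisible by $3$ in the relevant residue classes, forcing the coefficients in progressions $3n$ and $3n+1$ to vanish; the $\ov{spt}$ statement follows by adding the $\ov{spt1}$ and $\ov{spt2}$ pieces and observing further cancellation at $3n$. For \eqref{cong6} (mod $5$) and \eqref{cong7} (mod $5$), one identifies the reduced generating function with a form to which a classical dissection identity (in the spirit of Ramanujan's $5$-dissection of $1/(q;q)_\infty$, or Atkin–Garvan-type identities) applies, isolating the arithmetic progression $5n+3$ for $\ov{spt2}$ and $5n$ for $\ov{spt1}$. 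The main obstacle will be this last step: one must produce, for each congruence, the correct explicit $q$-series identity after the mod-$p$ reduction and verify that the targeted arithmetic progression genuinely has vanishing coefficients — this requires choosing the right linear combination from the corollaries so that the surviving quasimodular piece is a single manageable eta-quotient rather than a generic form, and then carrying out (or citing) the relevant dissection. Everything before that is bookkeeping with the quasimodularity already granted by Theorem \ref{main1} and the linear relations of Corollaries \ref{cor1}--\ref{cor3}.
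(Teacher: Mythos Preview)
Your plan goes astray at the crucial final step, and the misstep stems from a misunderstanding of what Corollaries~\ref{cor1}--\ref{cor3} actually say. They do \emph{not} allow you to ``solve for $\ov{R2}_2$ (and $\ov{R}_2$) modulo the appropriate primes as an explicit quasimodular form times $\ov{P}$.'' The second rank moments $\ov{R}_2$ and $\ov{R2}_2$ are not in $\ov{P}\cdot\ov{\mathcal W}_k$ for any $k$ (they are essentially mock modular), and the corollaries never express them as such. What the corollaries give are relations of the form
\[
\ov{N2}_4(n) \;=\; (\text{polynomial in }n)\cdot \ov{N2}_2(n) \;+\; (\text{polynomial-in-$n$ combinations of crank moments}),
\]
i.e.\ coefficient-by-coefficient identities with polynomial coefficients in $n$. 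Consequently your proposed endgame --- reduce the generating function to an explicit eta-quotient and then apply a Ramanujan-type $5$-dissection --- cannot get started, because there is no such eta-quotient to dissect.

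The paper's argument is both simpler and structurally different: one takes the relations in Corollaries~\ref{cor1}--\ref{cor3} (after clearing denominators) and reduces them modulo $3$ or $5$. The polynomial coefficients in $n$ then collapse to simple linear or quadratic expressions; for instance, reducing \eqref{k2two} modulo $3$ yields $(2n+2)\ov{N2}_2(n)\equiv (2n+2)\ov{M2}_2(n)\pmod 3$, which immediately gives $\ov{spt2}(n)=\ov{M2}_2(n)-\ov{N2}_2(n)\equiv 0\pmod 3$ whenever $n\not\equiv 2\pmod 3$, i.e.\ \eqref{cong4}. The other congruences follow in the same way, sometimes after choosing a slightly different basis for $\ov{P}\cdot\ov{\mathcal W}_k$ (replacing a $\delta_q$-derivative by a product like $\ov{C}_2\ov{C}_4/\ov{P}$) so that the reduction mod $5$ comes out cleanly, and then combining two such reduced relations. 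No dissection, no eta-quotient identification, and no sieving of a modular form is used; the arithmetic progressions emerge directly from the zeros of the polynomial-in-$n$ coefficients modulo $p$. (A minor point: $\ov{spt2}(n)=\ov{M2}_2(n)-\ov{N2}_2(n)$, without the factor of $\tfrac12$ you wrote.)
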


To finish we use a different method to give a congruence modulo
$3$ between $\ov{spt1}(n)$ and $\overline{\alpha}(n)$, the number
of overpartitions with even rank minus the number with odd rank.
\begin{theorem} \label{main4}
We have
$$
\overline{spt1}(n) \equiv \leg{n}{3} \overline{ \alpha }(n) \pmod
3.
$$
\end{theorem}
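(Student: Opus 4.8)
The plan is to establish a $q$-series identity modulo $3$ relating the generating function of $\overline{spt1}(n)$ to a twisted version of the generating function of $\overline{\alpha}(n)$. First I would write down the generating function for $\overline{spt1}(n)$. A standard combinatorial argument (as in Andrews' treatment of $spt(n)$, adapted to overpartitions) gives
\begin{equation*}
\sum_{n \geq 1} \overline{spt1}(n) q^n = \sum_{n \geq 1} \frac{q^n(-q^{n+1};q)_\infty (-q;q)_{n-1}}{(q^n;q)_\infty}\,\Big/\!\big(\text{odd-part bookkeeping}\big),
\end{equation*}
but more usefully one has the clean form $\sum_{n\ge1}\overline{spt1}(n)q^n = \overline{P}(q)\sum_{n\ge1}\frac{q^n}{1-q^n}\cdot(\text{correction})$; the key point is that $\overline{spt1}$ can be expressed via a second-moment-type object. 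Indeed, the cleanest route is to use the relation (analogous to the partition identity $spt(n) = np(n) - \frac12 N_2(n)$ of Andrews--Garvan--Dyson) expressing $\overline{spt1}(n)$, $\overline{spt2}(n)$, $\overline{spt}(n)$ in terms of $\overline{P}$, $nov$, $ov$, and the second rank/crank moments $\overline{R}_2$, $\overline{R2}_2$, $\overline{C}_2$, $\overline{C2}_2$ — these are exactly the relations referenced as \eqref{this} and \eqref{this2} in the excerpt, which I may assume.

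Next I would identify $\overline{\alpha}(n)$. Since $\overline{\alpha}(n) = \sum_m (-1)^m \overline{N}(m,n)$ is the rank generating function specialized at $-1$, we have $\sum_n \overline{\alpha}(n)q^n = \sum_{n\ge0} \frac{(-1;q)_n q^{n(n+1)/2}}{(q;q)_n(-q;q)_n}\cdot(\text{the }z=-1\text{ rank series})$, which by known evaluations of the overpartition rank function at roots of unity (Lovejoy--Osburn, or Bringmann--Lovejoy) has a product form or a theta-like expansion. The heart of the matter is then to show that, modulo $3$, the series $\sum_n \overline{spt1}(n)q^n$ and $\sum_n \leg{n}{3}\overline{\alpha}(n)q^n$ agree. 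I would reduce everything modulo $3$: using $(q;q)_\infty^3 \equiv (q^3;q^3)_\infty \pmod 3$ and the analogous congruence for $(-q;q)_\infty$, plus the fact that $\sum \frac{nq^n}{1-q^n} \equiv \sum \frac{q^n}{1-q^n} \pmod{?}$-type simplifications and the dissection $\sum_{n} a_n q^n \mapsto \sum_n \leg{n}{3} a_n q^n$ which picks out $n \equiv 1 \pmod 3$ minus $n \equiv 2 \pmod 3$. The twist by $\leg{\cdot}{3}$ should correspond on the $\overline{\alpha}$ side to extracting a specific residue class, and I expect the second-moment expression for $\overline{spt1}$ to collapse modulo $3$ — because $m^2 \equiv \leg{m}{3}^2 \cdot(\text{something}) \pmod 3$ and in particular $m^2 \equiv 1 \pmod 3$ for $3 \nmid m$ while $m^2 \equiv 0$ for $3 \mid m$ — to precisely the alternating count weighted by a quadratic character.

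The main obstacle will be the bookkeeping that separates the odd-smallest-part contribution ($\overline{spt1}$) from the even one ($\overline{spt2}$) at the level of $q$-series, and then tracking the $\leg{n}{3}$ twist through the mock-theta/Appell--Lerch structure of the overpartition rank function. Concretely, I expect to need an explicit formula for $\sum_n \overline{N}(m,n)q^n$ or for the full two-variable generating function $\sum_{m,n}\overline{N}(m,n)z^m q^n = \frac{(-q;q)_\infty}{(q;q)_\infty}\sum_{n\ge1}\frac{(-1)^{n}q^{n^2}(1-q^n)}{1 - zq^n - z^{-1}q^n + q^{2n}}\cdot(\ldots)$, specialize the crank/spt side, and show the difference is annihilated mod $3$. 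Once the two sides are matched as formal power series modulo $3$, the theorem follows by comparing coefficients of $q^n$. I would organize the write-up as: (1) recall the $q$-series for $\overline{spt1}$ from \eqref{this}, \eqref{this2}; (2) recall/derive the $q$-series for $\overline{\alpha}$; (3) reduce both mod $3$ using the Frobenius-type congruences for infinite products; (4) match coefficients, using that the $\leg{n}{3}$ twist realizes the needed sign pattern.
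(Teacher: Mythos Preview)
Your proposal is not yet a proof: the key formulas are left as placeholders (``odd-part bookkeeping'', ``(correction)'', ``$(\ldots)$''), and the central heuristic you offer for why the two sides should match modulo $3$ is actually incorrect. You write that since $m^2\equiv 1\pmod 3$ for $3\nmid m$, the second rank moment should collapse modulo $3$ to ``the alternating count weighted by a quadratic character''. But $\overline{\alpha}(n)=\sum_m(-1)^m\overline{N}(m,n)$ is the $z=-1$ specialization of the rank generating function, i.e.\ it detects the \emph{parity} of the rank, whereas $\overline{N}_2(n)\equiv \sum_{3\nmid m}\overline{N}(m,n)\pmod 3$ detects rank residues \emph{modulo $3$}. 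These are unrelated counts, so there is no reason for your reduction to produce $\overline{\alpha}$. The Frobenius-type congruence $(q;q)_\infty^3\equiv(q^3;q^3)_\infty\pmod 3$ alone will not bridge this gap, because neither generating function is an infinite product; both are Appell--Lerch / mock-theta type series whose mod-$3$ behaviour is not governed by such product congruences.

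The paper's argument is of a completely different nature and explains why a direct $q$-series manipulation is unlikely to succeed. Both $\overline{Spt1}(q)$ and $\overline{f}(q)=\sum\overline{\alpha}(n)q^n$ are the holomorphic parts of weight-$\tfrac32$ harmonic Maass forms on $\Gamma_0(16)$; crucially, after suitable quasimodular corrections they share (up to a factor of $4$) the \emph{same} nonholomorphic completion $\overline{NH}(\tau)$. One first disposes of $n\equiv 0\pmod 3$ using the congruences \eqref{cong4} and \eqref{cong5}, then forms the twisted difference $G=\leg{\bullet}{3}\otimes\bigl(4\,\overline{Spt1}-\leg{\bullet}{3}\otimes\overline{f}\bigr)$. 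Because the nonholomorphic parts are supported on negative squares, the twist turns $G$ (up to a mod-$3$ correction by genuine modular pieces) into a weakly holomorphic modular form on $\Gamma_0(144)$. After clearing poles with an appropriate $\eta$-quotient and adjusting weights via $\eta^6(\tau)/\eta^2(3\tau)\equiv 1\pmod 3$, one lands in a finite-dimensional space of holomorphic forms, and Sturm's theorem reduces the congruence to checking the first $193$ Fourier coefficients. None of this structure is visible from elementary $q$-series manipulation; the harmonic Maass form framework is what makes the finite verification possible.
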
\begin{remark}
In \cite{Br-Lo2}, the coefficients $\overline{\alpha}(n)$ are related to
the Hurwitz class number $H(n)$ of binary quadratic forms of discriminant $-n$.
To be more precise, it is shown in \cite{Br-Lo2} that
\begin{equation} \label{formula1eq}
(-1)^n\ov{\alpha}(n) =
\begin{cases}
-4H(4n) & \text{if }n \equiv 1,2 \pmod{4}, \\
-24H(n) & \text{if }n \equiv 3 \pmod{8}, \\
-16H(n)&\text{if } n \equiv 7\pmod{8}, \\
-16H(n) - \frac{1}{3}r(n/4)&\text{if } 4 \mid n,
\end{cases}
\end{equation}
where $r(n)$ is given by
$$
\sum_{n=0}^{\infty} r(n)\, q^n :=\Theta^3(\tau),
$$
with $\Theta(\tau):=\sum_{n \in \Z} q^{n^2}$.
It is well-known that
\begin{equation} \label{rofn}
r(n) =
\begin{cases}
12H(4n)& \text{if $n \equiv 1,2 \pmod{4}$}, \\
24H(n)& \text{if $n \equiv 3 \pmod{8}$}, \\
r(n/4)& \text{if $n \equiv 0 \pmod{4}$}, \\
0& \text{if $n \equiv 7 \pmod{8}$},
\end{cases}
\end{equation}
thus modulo $3$,  $\overline{spt1}(n)$ is  related to class numbers.
\end{remark}
As a corollary, class number relations imply the following
multiplicative formula:
\begin{corollary} \label{HeckeTheorem}
Let $\ell \not=2,3$ be a prime. Then we have
$$
\overline{spt1}\left( \ell^2n\right) + \leg{-n}{\ell}
\overline{spt1}(n) +\ell \, \overline{spt1} \left(
\frac{n}{\ell^2}\right) \equiv (\ell+1) \overline{spt1}(n) \pmod
3.
$$
\end{corollary}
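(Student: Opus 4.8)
The plan is to combine Theorem \ref{main4} with the Hecke eigenform structure underlying the coefficients $\overline{\alpha}(n)$. By Theorem \ref{main4} we have $\overline{spt1}(n) \equiv \leg{n}{3}\overline{\alpha}(n) \pmod 3$, so it suffices to establish the corresponding multiplicative relation for $\leg{n}{3}\overline{\alpha}(n)$ modulo $3$, for primes $\ell \neq 2,3$. First I would recall from the Remark (citing \cite{Br-Lo2}) that $(-1)^n\overline{\alpha}(n)$ is, up to the explicit cases in \eqref{formula1eq}, a multiple of Hurwitz class numbers $H(4n)$ or $H(n)$; and the class numbers $H(n)$ are the Fourier coefficients of Zagier's weight $3/2$ non-holomorphic Eisenstein series. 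More usefully, the relevant combination is known to be an eigenform-type object, and the classical Eichler–Selberg / Hurwitz–Kronecker class number relations give, for primes $\ell$,
$$
H(\ell^2 n) + \leg{-n}{\ell} H(n) + \ell\, H\!\left(\tfrac{n}{\ell^2}\right) = (\ell+1) H(n)
$$
whenever $\ell \nmid n$ (with the standard conventions when $\ell^2 \nmid n$, so that one or more terms drop out). This is precisely the shape of a Hecke operator $T_{\ell^2}$ acting on a weight $3/2$ form.

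Next I would transport this relation through the formula \eqref{formula1eq}. The point is that the map $n \mapsto \leg{n}{3}\overline{\alpha}(n)$ agrees modulo $3$ with a twist of a class-number generating function, and the Hecke relation above is compatible with the arithmetic twists $(-1)^n$, $\leg{n}{3}$, and the $4$-dissection appearing in \eqref{formula1eq}: for $\ell \neq 2,3$, the operators $T_{\ell^2}$ commute with twisting by the quadratic characters modulo $4$ and modulo $3$, and they respect the decomposition into residue classes used in \eqref{formula1eq}. Tracking the constants, $-4H(4n)$, $-24H(n)$, $-16H(n)$ all reduce modulo $3$ to $\pm H(\cdot)$ up to a fixed unit, and $-24H(n) \equiv 0$, which causes no difficulty because that case ($n\equiv 3\pmod 8$) only multiplies both sides of the desired congruence by the same factor. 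The term $-\frac13 r(n/4)$ in the case $4\mid n$ is handled using \eqref{rofn}, which rewrites $r(n/4)$ again in terms of $H$ (or shows it vanishes), so it too obeys a $T_{\ell^2}$-relation. Assembling these pieces gives
$$
\leg{\ell^2 n}{3}\overline{\alpha}(\ell^2 n) + \leg{-n}{\ell}\leg{n}{3}\overline{\alpha}(n) + \ell\leg{n/\ell^2}{3}\overline{\alpha}(n/\ell^2) \equiv (\ell+1)\leg{n}{3}\overline{\alpha}(n) \pmod 3,
$$
using $\leg{\ell^2 n}{3} = \leg{n}{3}$ and $\leg{n/\ell^2}{3}=\leg{n}{3}$. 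Applying Theorem \ref{main4} termwise then yields the claimed congruence for $\overline{spt1}$.

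I expect the main obstacle to be bookkeeping rather than deep input: one must be careful that the Hurwitz–Kronecker class number relation is being applied with the correct normalizations for $H(0)$ and for the boundary cases $\ell \mid n$ but $\ell^2\nmid n$, and that the $\leg{\cdot}{\ell}$ character appearing in the class number relation matches the $\leg{-n}{\ell}$ in the statement after the sign twist $(-1)^n$ is accounted for. A secondary point is to confirm that the four cases of \eqref{formula1eq} are permuted among themselves (or fixed) by $n \mapsto \ell^2 n$ for $\ell\neq 2$, which is immediate since $\ell^2 \equiv 1 \pmod 8$ forces $\ell^2 n \equiv n \pmod 8$, so each residue class is preserved and the case analysis is uniform. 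Once these normalizations are pinned down, the corollary follows formally from Theorem \ref{main4} and the classical class number relations.
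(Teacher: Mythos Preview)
Your approach is correct in spirit and reaches the same destination via the same underlying fact (the Hecke eigenform property of the class-number generating function), but the execution differs from the paper's in a way worth noting. The paper first proves an \emph{exact} identity
\[
\overline{\alpha}(\ell^2 n) + \leg{-n}{\ell}\overline{\alpha}(n) + \ell\,\overline{\alpha}(n/\ell^2) = (\ell+1)\overline{\alpha}(n)
\]
for all $n$ (Proposition~\ref{ClassIdentity}), not merely a congruence modulo~$3$. It does this by observing that $g_\ell := \overline{f}\,|\,T_{\ell^2} - (\ell+1)\overline{f}$ is a \emph{holomorphic} weight-$3/2$ form on $\Gamma_0(16)$ (the non-holomorphic parts cancel because $\eta^2(\tau)/\eta(2\tau)$ is a Hecke eigenform with eigenvalue $1+1/\ell$, cf.~\cite{BrO}), and then the valence formula reduces the vanishing of $g_\ell$ to checking the first four coefficients, which is done via~\eqref{formula1eq} and~\eqref{Cohenformula}. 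The corollary then follows immediately from Theorem~\ref{main4} after twisting by $\leg{\cdot}{3}$.

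Your route instead works entirely on the coefficient side: you invoke the Hurwitz--Kronecker Hecke relation for $H(n)$ and the analogous relation for $r(n)$ (coefficients of $\Theta^3$), and push them through the four cases of~\eqref{formula1eq}. This is more elementary (no harmonic Maass forms, no valence formula) but correspondingly more bookkeeping-heavy, and it only yields the congruence modulo~$3$ rather than the exact identity. One small correction: the class-number Hecke relation holds for \emph{all} $n$, not just $\ell\nmid n$, with the usual conventions $H(n/\ell^2)=0$ when $\ell^2\nmid n$ and $\leg{-n}{\ell}=0$ when $\ell\mid n$; you should not restrict to $\ell\nmid n$. Also, your observation that the case $n\equiv 3\pmod 8$ is trivial modulo~$3$ (since $24\equiv 0$) is fine for your purposes, but note that the paper's exact identity covers this case non-trivially.
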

\begin{remark}
Work of the authors \cite{Br-Lo-O} shows that the generating function for
$\overline{spt1}(n)$ can  (up to a quasimodular form) be viewed as the
holomorphic part of a harmonic  Maass form (see Section \ref{HeckeCong}
for the definition). Corollary \ref{HeckeTheorem} now says that modulo $3$
the generating function for $\overline{spt1}(n)$ is a Hecke eigenform.
\end{remark}

The paper is organized as follows. In Section 2, we recall some
facts about quasimodular forms and prove Theorem \ref{main1}. In
Section 3, we compute some exact relations involving rank and
crank moments.  In Section 4, we write the combinatorial functions
in Theorems \ref{main2} and \ref{main3} in terms of the rank and crank
moments and prove these theorems.  In Section 5, we recall the
notion of harmonic Maass forms  along with some results from
\cite{Br-Lo-O}, and prove Theorem \ref{main4} and Corollary
\ref{HeckeTheorem}.

\section{Proof of Theorem \ref{main1}}
Before proving Theorem \ref{main1}, we recall a few facts about
quasimodular forms \cite{kz}.  First, quasimodular forms on
$\Gamma_0(N)$ may be regarded as polynomials in the Eisenstein
series $E_2$ whose coefficients are modular forms (of non-negative
weight) on $\Gamma_0(N)$. The reader unfamiliar with the theory
of modular forms may consult \cite{cmbs}. Here we have
\begin{equation*}
E_2 (\tau):= 1-24\sum_{n \geq 1} \frac{nq^n}{(1-q^n)}.
\end{equation*}
Second, the space of quasimodular forms on $\Gamma_0(N)$ is
preserved by the differential operator $\delta_q := q
\frac{d}{dq}$.  More specifically, this operator sends a
quasimodular form of weight $2k$ to a quasimodular form of weight
$2k+2$.  Finally, replacing $q$ by $q^2$ sends a quasimodular form
of weight $2k$ on $\Gamma_0(N)$ to a quasimodular form of weight
$2k$ on $\Gamma_0(2N)$.

\begin{proof}[Proof of Theorem \ref{main1}]

We now prove parts $(i)$ and $(ii)$ of Theorem \ref{main1}.  Let
$C(z;q)$ denote the two-variable generating function for the crank
of a partition,
\begin{equation*}
C(z;q) := \sum_{m \in \mathbb{Z} \atop n \geq 0} M(m,n)z^mq^n =
\frac{(q;q)_{\infty}}{(zq;q)_{\infty}(q/z;q)_{\infty}}.
\end{equation*}
Here we employ the standard $q$-series notation,
\begin{equation*}
(a;q)_{\infty} := \prod_{k \geq 0} \left(1-aq^k\right).
\end{equation*}
By definition, the residual cranks have two-variable generating
functions
\begin{equation} \label{Cofzq}
\overline{C}(z;q) := \sum_{m \in \mathbb{Z} \atop n \geq 0}
\overline{M}(m,n)z^mq^n = (-q;q)_{\infty}C(z;q) =
\frac{(q^2;q^2)_{\infty}}{(zq;q)_{\infty}(q/z;q)_{\infty}},
\end{equation}
and
\begin{equation} \label{calCofzq}
\overline{{C2}}(z;q) := \sum_{m \in \mathbb{Z} \atop n \geq 0}
\overline{{M2}}(m,n)z^mq^n =
\frac{(-q;q)_{\infty}}{\left(q;q^2\right)_{\infty}}
C\left(z;q^2\right) =
\frac{(-q;q)_{\infty}(q^2;q^2)_{\infty}}{(q;q^2)_{\infty}(zq^2;q^2)_{\infty}(q^2/z;q^2)_{\infty}}.
\end{equation}

Now using the differential operator $\delta_z := z\frac{d}{dz}$ we
have
\begin{equation*}
\delta_{z}^{j}\left(\overline{C}(z;q)\right) \Bigl \lvert_{z=1}=
\left\{ \begin{array}{ll} \overline{C}_{j} & \qquad \text{if } j \hspace{.1in}  \textup{is even}, \vspace{.05in} \\
 0 & \qquad \text{if } j \hspace{.1in} \textup{is odd}, \end{array}
\right.
\end{equation*}
and
\begin{equation*}
\delta_{z}^{j}\left(\overline{C2}(z;q)\right) \Bigl \lvert_{z=1}=
\left\{ \begin{array}{ll} \overline{C2}_{j} & \qquad \text{if } j \hspace{.1in}  \textup{is even}, \vspace{.05in} \\
0 & \qquad\text{if }  j \hspace{.1in} \textup{is odd}. \end{array}
\right.
\end{equation*}
But $\delta_{z}^{j}\left(\overline{C}(z;q)\right) =
(-q;q)_{\infty}\delta_{z}^{j}\left({C}(z;q)\right)$ and Atkin and Garvan
\cite[Section 4]{atkgar} have already shown that if $j \geq 1$,
then $\delta_{z}^{j}\left({C}(z;q)\right) \lvert_{z=1}$ is in the space
$P\cdot \cal{W}_j$, where $P=P(q) := 1/(q;q)_{\infty}$ is the generating
function for partitions and $\cal{W}_j$ is the space of
quasimodular forms of weight at most $2j$ on $\SL_2(\Z)$ having
no constant term. Since $\ov{P} = (-q;q)_{\infty}P$, we have that
$\overline{C}_{2j}$ is in $\ov{P}\cdot\ov{\cal{W}}_j$.  In a similar
way we see that $\ov{C2}_{2j}$ is in $\ov{P}\cdot\ov{\cal{W}}_j$.

To finish we may calculate that
\begin{equation*}
\delta_q\left(\ov{P} \right) = \ov{P}
\left(\sum_{n \geq
1}\frac{2nq^n}{(1-q^n)} - \sum_{n \geq
1}\frac{2nq^{2n}}{(1-q^{2n})}\right),
\end{equation*}
and hence $\delta_q(\ov{P}) \in \ov{P} \cdot \ov{\cal{W}}_1$.  By the
Leibniz rule and the fact that $\delta_q$ maps the space
$\ov{\cal{W}}_{k}$ into $\ov{\cal{W}}_{k+1}$, we have that $\delta_q
f \in \ov{P}\cdot\ov{\cal{W}}_{k+1}$ if $f \in
\ov{P}\cdot\ov{\cal{W}}_{k}$.  This completes the proof of parts
$(i)$ and $(ii)$.

For parts $(iii)$ and $(iv)$, we use partial differential
equations established in \cite{Br-Lo-O}.  Let $\ov{R}(z;q)$ denote
the two-variable generating function for $\ov{N}(m,n)$,
\begin{equation*}
\ov{R}(z;q) := \sum_{m \in \mathbb{Z} \atop n \geq 0}
\ov{N}(m,n)z^mq^n.
\end{equation*}
Thus we have
\begin{equation*}
\delta_{z}^{j}\left(\overline{R}(z;q)\right) \Bigl \lvert_{z=1}=
\left\{ \begin{array}{ll} \overline{R}_{j}
& \qquad \text{if } j \hspace{.1in}  \textup{is even}, \vspace{.05in} \\
0 & \qquad \text{if }  j \hspace{.1in} \textup{is odd}. \end{array}
\right.
\end{equation*}

We have the following partial differential equation which relates
$C(z;q)$ and $\ov{R}(z;q)$ \cite{Br-Lo-O}:
\begin{equation} \label{pdenostar}
\begin{aligned}
z(1+z)\frac{(q)_{\infty}^2}{(-q)_{\infty}} & \left[C(z;q)\right]^3 (-zq;q)_{\infty}\left(-q/z;q\right)_{\infty} \\
& = \Bigl ( 2(1-z)^2 (1+z) \delta_{q} + z(1+z) + 2z(1-z)
\delta_{z} +
\frac{1}{2} (1+z) (1-z)^2 \delta_{z}^2  \Bigr ) \ov{R}(z;q). \\
\end{aligned}
\end{equation}

Let $a$ be even and positive. After applying $\delta_{z}^{a}$ to
both sides of (\ref{pdenostar}) and setting $z=1$ we get
\begin{equation} \label{relation}
\begin{gathered}
\frac{1}{P\ov{P}} \sum_{j = 0}^a {a \choose j}
\delta_z^j\left\{(z^2+z)C(z;q)^3 \right\}\delta_z^{a-j}\{(-zq;q)_{\infty}\left(-q/z;q\right)_{\infty}\}
|_{z=1}
- \left(2^a+1\right)\ov{P} \\
- 2\left(3^a-2^a-1\right)\delta_q(\ov{P}) =
\left(a^2-3a+2\right)\overline{R}_a + 2\sum_{i=1}^{a/2-1} {a \choose
2i}\left(3^{2i} - 2^{2i} - 1\right)\delta_q\ov{R}_{a-2i} \\
+ \sum_{i=1}^{a/2-1} \left({a \choose 2i}(2^{2i}+1) + 2{a \choose
2i+1}\left(1-2^{2i+1}\right) + \frac{1}{2}{a \choose 2i+2}
\left(3^{2i+2} -
2^{2i+2} - 1\right)\right) \ov{R}_{a-2i}.
\end{gathered}
\end{equation}

We claim that the left hand side of (\ref{relation}) is in
$\ov{P}\cdot \ov{\cal{W}}_{k}$, where $2k=a$.  This is clearly
true for the final term.  For the first term, we have already
noted that for $j \geq 1$ we have $\delta_z^jC(z;q) \lvert_{z=1}
\in P\cdot\ov{\cal{W}}_j$.  As for
$(-zq;q)_{\infty}\left(-q/z;q\right)_{\infty}$, we may compute that

$$
\begin{aligned}
\delta_{z} \Bigl( (-zq;q)_{\infty}\left(-q/z;q\right)_{\infty} \Bigr) & = \Biggl (z
\sum_{m=1}^{\infty}
\frac{q^m}{1+ zq^m} - z^{-1} \sum_{m=1}^{\infty} \frac{q^m}{1+ z^{-1}q^m} \Biggr ) (-zq;q)_{\infty}\left(-q/z;q\right)_{\infty} \\
& = \Biggl (\sum_{m=1}^{\infty} \sum_{s=1}^{\infty}
(-1)^s q^{ms} (z^{-s} - z^{s}) \Biggr ) (-zq;q)_{\infty}\left(-q/z;q\right)_{\infty},\\
\end{aligned}
$$
and for $j \geq 1$,
$$
\left.
\delta_{z}^{j}\left(\sum_{m=1}^{\infty} \sum_{s=1}^{\infty} (-1)^s
q^{ms} \left(z^{-s} - z^{s}\right)\right) \right|_{z=1}=
\left\{ \begin{array}{ll} 0 & \qquad \text{if }j \hspace{.1in}  \textup{is even}, \vspace{.05in} \\
\displaystyle -2\sum_{m=1}^{\infty} \sum_{s=1}^{\infty} (-1)^s
s^{j} q^{ms} & \qquad \text{if }j \hspace{.1in} \textup{is odd}. \end{array}
\right.
$$

\noindent Then one can check that

$$
-2\sum_{m=1}^{\infty} \sum_{s=1}^{\infty} (-1)^s s^{j} q^{ms}
=-2^{j+2} \sum_{n \geq 1} \frac{n^jq^{2n}}{(1-q^{2n})} + 2\sum_{n
\geq 1} \frac{n^jq^{n}}{(1-q^{n})}.
$$
Thus for $j \geq 1$ we have
$$
\delta_z^j
\left.
\left\{(-zq;q)_{\infty}\left(-q/z;q\right)_{\infty}\right\} \right\lvert_{z=1} \in
(\overline{P}^2/P^2) \cdot \ov{\cal{W}}_j.
$$
Putting everything
together we see that the only contribution from the first term on
the left hand side which is not in $\ov{P}\cdot\ov{\cal{W}}_{k}$
is
$$\left. \frac{1}{P\ov{P}}\delta_z^a\left\{(z^2+z)\right\}C(z;q)^3(-zq;q)_{\infty}\left(-q/z;q\right)_{\infty}
\right\lvert_{z=1}.
$$
But this cancels with the second term on the left hand side.  This
establishes part $(iii)$.

The proof of part $(iv)$ is the same, except that we use the
partial differential equation \cite{Br-Lo-O}
\begin{equation*}
\begin{aligned}
2z(1+z) & \left(q^2; q^2\right)_{\infty}^2 \left[C\left(z;q^2\right)\right]^3 (-zq;q)_{\infty}\left(-q/z;q\right)_{\infty} \\
& = \Bigl ((1+z)(1-z)^2 \delta_{q} + 2z(1+z) + 4z(1-z)\delta_{z} +
(1+z)(1-z)^2 \delta_{z}^2 \Bigr) \ov{R2}(z;q). \\
\end{aligned}
\end{equation*}
Here $\ov{R2}(z;q)$ is the two-variable generating function for
$\ov{N2}(m,n)$, so that
\begin{equation*}
\delta_{z}^{j}\left(\overline{R2}(z;q)\right) \Bigl \lvert_{z=1}=
\left\{ \begin{array}{ll} \overline{R2}_{j} & \qquad \text{if }  j \hspace{.1in}  \textup{is even}, \vspace{.05in} \\
0 & \qquad \text{if }  j \hspace{.1in} \textup{is odd}. \end{array}
\right.
\end{equation*}

\end{proof}

\section{Exact relations}
From part $(b)$ of Proposition 1 in \cite{kz} and known formulas
for the dimensions of spaces of modular forms on $\Gamma_{0}(2)$
(see \cite{cmbs}), we have that the sequence
$\{\text{dim}\hspace{.025in} (\ov{\cal{W}_{k}})\}_{k =1}^{\infty}$ begins
$\{2,6,12,21,33,49,\dots\}$.  Suppose first that $k=2$.  Then
there are $6$ functions in parts $(i)$ and $(ii)$ of Theorem
\ref{main1}.  Computation (with MAPLE, for example) shows that
they are linearly independent.  Hence, each function in parts
$(iii)$ and $(iv)$ may be written as a linear combination of these
$6$ functions, and we compute the following:
\begin{corollary} \label{cor1}
We have
\begin{equation} \label{k2one}
\overline{N}_4(n) = (-8n-1)\ov{N}_2(n) +
\left(\frac{-216+24n}{77}\right)\overline{M}_2(n) \\ +
\frac{192}{77}\overline{M}_4(n) +
\left(\frac{260+184n}{77}\right)\ov{{M2}}_2(n) -
\frac{40}{11}\ov{{M2}}_4(n)
\end{equation}
and
\begin{equation} \label{k2two}
\overline{N2}_4(n) = (-2n-1)\ov{N2}_2(n) +
\left(\frac{-27+3n}{77}\right)\overline{M}_2(n) +
\frac{24}{77}\overline{M}_4(n) +
\left(\frac{71-131n}{77}\right)\ov{{M2}}_2(n) -
\frac{16}{11}\ov{{M2}}_4(n).
\end{equation}
\end{corollary}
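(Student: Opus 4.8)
The plan is to reduce the corollary to linear algebra inside the explicit $6$-dimensional space $\overline{P}\cdot\overline{\mathcal{W}}_2$. For $k=2$ the constraints $m\ge 0$, $1\le j\le 2$, $j+m\le 2$ leave exactly the three functions $\overline{C}_2$, $\overline{C}_4$, $\delta_q(\overline{C}_2)$ in $\overline{\mathcal{C}}_2$ and, symmetrically, $\overline{C2}_2$, $\overline{C2}_4$, $\delta_q(\overline{C2}_2)$ in $\overline{\mathcal{C}2}_2$. By Theorem \ref{main1}$(i)$ and $(ii)$ these six functions all lie in $\overline{P}\cdot\overline{\mathcal{W}}_2$; since multiplication by the invertible power series $\overline{P}$ is a linear isomorphism $\overline{\mathcal{W}}_2\to\overline{P}\cdot\overline{\mathcal{W}}_2$, the target has dimension $\dim\overline{\mathcal{W}}_2=6$ by the dimension count recalled above. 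A direct computation of the $q$-expansions (e.g.\ with MAPLE) shows that the matrix of the first several coefficients of these six functions has rank $6$, so they form a basis of $\overline{P}\cdot\overline{\mathcal{W}}_2$.

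Next I would specialize parts $(iii)$ and $(iv)$ of Theorem \ref{main1} to $a=2k=4$. The inner sums then run only over $i=1$ and the binomials are small constants, so one finds that the function in $(iii)$ equals $6\overline{R}_4+48\,\delta_q(\overline{R}_2)+6\overline{R}_2$ and the function in $(iv)$ equals $6\overline{R2}_4+12\,\delta_q(\overline{R2}_2)+6\overline{R2}_2$ (the leading factor being $a^2-3a+2=6$). Each of these is again in $\overline{P}\cdot\overline{\mathcal{W}}_2$, hence a unique linear combination of the six basis functions. To determine the coefficients I would expand everything as a $q$-series to somewhat more than $6$ terms and solve the resulting linear system; since both sides are known a priori to lie in the $6$-dimensional space, matching enough Taylor coefficients forces the corresponding identity of power series.

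Finally I would compare the coefficient of $q^n$ on the two sides of each identity. On the crank side $[q^n]\,\overline{C}_{2j}=\overline{M}_{2j}(n)$ and $[q^n]\,\delta_q(\overline{C}_{2j})=n\,\overline{M}_{2j}(n)$, with the analogous statements for $\overline{C2}$ and $\overline{M2}$; on the rank side $[q^n]\,\overline{R}_{2j}=\overline{N}_{2j}(n)$ and $[q^n]\,\delta_q(\overline{R}_{2j})=n\,\overline{N}_{2j}(n)$. Substituting and dividing by $6$, the $(iii)$ identity becomes an expression for $\overline{N}_4(n)+8n\,\overline{N}_2(n)+\overline{N}_2(n)$ as a combination of $\overline{M}_2(n)$, $\overline{M}_4(n)$, $\overline{M2}_2(n)$, $\overline{M2}_4(n)$ with coefficients affine in $n$, which is exactly \eqref{k2one}; the $(iv)$ identity yields \eqref{k2two} in the same way.

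The only genuinely nontrivial step is the numerical bookkeeping: generating the $q$-expansions of $\overline{C}(z;q)$, $\overline{C2}(z;q)$ from \eqref{Cofzq} and \eqref{calCofzq} and of the two-variable rank series $\overline{R}(z;q)$, $\overline{R2}(z;q)$, applying $\delta_z^j|_{z=1}$ and $\delta_q$, and carrying out the linear solves. This is routine, but it must be carried to enough precision to confirm both that the six basis functions are linearly independent (so that the linear combinations are unique) and that the coefficient agreement persists beyond the number of unknowns; apart from that, only Theorem \ref{main1} and elementary linear algebra are needed.
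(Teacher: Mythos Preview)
Your proposal is correct and follows essentially the same route as the paper: for $k=2$ the six crank functions in $\overline{\mathcal{C}}_2\cup\overline{\mathcal{C}2}_2$ are verified by direct $q$-series computation to be linearly independent, hence a basis of the $6$-dimensional space $\overline{P}\cdot\overline{\mathcal{W}}_2$, and the functions in parts $(iii)$ and $(iv)$ of Theorem~\ref{main1} (which you correctly specialize at $a=4$) are then expanded in this basis and the resulting identities read off coefficient-wise. Your write-up simply makes explicit the intermediate constants $6$, $48$, $6$ (resp.\ $6$, $12$, $6$) and the passage from $q$-series to the moment values, which the paper leaves to the reader and to MAPLE.
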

Now let $k=3$.  Again we find that the $12$ functions in parts
$(i)$ and $(ii)$ of Theorem \ref{main1} are linearly independent,
and so the functions in parts $(iii)$ and $(iv)$ may be written in
terms of them.
Following the lead of Atkin and Garvan, we use \eqref{k2one} and
\eqref{k2two} to eliminate $\ov{N}_4(n)$ and $\ov{N2}_4(n)$, thus
expressing $\ov{N}_{6}(n)$ (resp. $\ov{N2}_6(n)$) in terms of
$\ov{N}_2(n)$ (resp. $\ov{N2}_2(n)$) and the crank moments.
\begin{corollary} \label{cor2}
We have
\begin{equation} \label{k3onebis}
\begin{aligned}
\ov{N}_6(n) &= \left(3+20n+48n^2\right)\ov{N}_2(n) +
\left(\frac{2192796}{274505} + \frac{123276n}{7595} +
\frac{-5185344n^2}{1921535}\right)\ov{M}_2(n) \\
&+ \left(\frac{-445728}{54901} +
\frac{-5730048n}{384307}\right)\ov{M}_4(n) +
\left(\frac{5376}{3565}\right)\ov{M}_6(n) \\ &+
\left(\frac{-386988}{39215} + \frac{-54556468n}{1921535} +
\frac{-30679392n^2}{1921535}\right)\ov{M2}_2(n) \\
&+ \left(\frac{96204}{7843} +
\frac{1412352n}{54901}\right)\ov{M2}_4(n) +
\left(\frac{-9056}{3565}\right)\ov{M2}_6(n)
\end{aligned}
\end{equation}
and
\begin{equation} \label{k3twobis}
\begin{aligned}
\ov{N2}_6(n) &= \left(3+5n+3n^2\right)\ov{N2}_2(n) +
\left(\frac{249003}{274505} + \frac{36273n}{83545} +
\frac{-162042n^2}{1921535}\right)\ov{M}_2(n) \\
&+ \left(\frac{-46014}{54901} +
\frac{-179064n}{384307}\right)\ov{M}_4(n) +
\left(\frac{168}{3565}\right)\ov{M}_6(n) \\ &+
\left(\frac{-765123}{274505} + \frac{6826601n}{1921535} +
\frac{4805874n^2}{1921535}\right)\ov{M2}_2(n) \\
&+ \left(\frac{39102}{7843} +
\frac{44136n}{54901}\right)\ov{M2}_4(n) +
\left(\frac{-3848}{3565}\right)\ov{M2}_6(n).
\end{aligned}
\end{equation}
\end{corollary}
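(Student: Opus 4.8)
The plan is to run the level-$k=3$ version of the linear-algebra argument already used to prove Corollary~\ref{cor1}. First I would list the twelve functions comprising $\ov{\cal{C}}_3\cup\ov{\cal{C}2}_3$ from parts $(i)$ and $(ii)$ of Theorem~\ref{main1}: for $j=1$ these are $\ov{C}_2,\ \delta_q\ov{C}_2,\ \delta_q^2\ov{C}_2$ and the three analogues with $\ov{C2}_2$; for $j=2$ they are $\ov{C}_4,\ \delta_q\ov{C}_4,\ \ov{C2}_4,\ \delta_q\ov{C2}_4$; and for $j=3$ they are $\ov{C}_6$ and $\ov{C2}_6$. Using the product formulas \eqref{Cofzq} and \eqref{calCofzq} together with the operator $\delta_z$, one expands each of these as a $q$-series to sufficiently high order and checks by machine that the twelve are linearly independent over $\Q$; since $\dim\ov{\cal{W}}_3 = 12$, they then form a basis of $\ov{P}\cdot\ov{\cal{W}}_3$.

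Next I would specialize part $(iii)$ of Theorem~\ref{main1} to $a=2k=6$. Since $a^2-3a+2=20$, the theorem asserts that
$$
20\,\ov{R}_6 \;+\; 2\sum_{i=1}^{2}\binom{6}{2i}\bigl(3^{2i}-2^{2i}-1\bigr)\delta_q\ov{R}_{6-2i} \;+\; \sum_{i=1}^{2} c_i\,\ov{R}_{6-2i}
$$
(with $c_1,c_2$ the explicit integers appearing in the theorem) lies in $\ov{P}\cdot\ov{\cal{W}}_3$, hence equals a $\Q$-linear combination of the twelve basis functions; its coefficients are pinned down by comparing enough initial $q$-coefficients and solving the resulting linear system. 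Extracting the coefficient of $q^n$ on both sides and using $\delta_q^m(\ov{R}_{2j})=\sum_{n\geq0} n^m\ov{N}_{2j}(n)q^n$ (and likewise for $\ov{C}_{2j},\ov{C2}_{2j}$), one gets an identity expressing $20\,\ov{N}_6(n)$, together with terms in $n\,\ov{N}_4(n)$, $\ov{N}_4(n)$, $n^2\ov{N}_2(n)$, $n\,\ov{N}_2(n)$ and $\ov{N}_2(n)$, as a combination with polynomial-in-$n$ coefficients of $\ov{M}_2(n),\ov{M}_4(n),\ov{M}_6(n),\ov{M2}_2(n),\ov{M2}_4(n),\ov{M2}_6(n)$. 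Substituting \eqref{k2one}, and $n$ times \eqref{k2one}, to remove every occurrence of $\ov{N}_4(n)$ and $n\,\ov{N}_4(n)$, and then solving for $\ov{N}_6(n)$, yields \eqref{k3onebis}. Formula \eqref{k3twobis} is obtained identically, starting instead from part $(iv)$ of Theorem~\ref{main1} at $a=6$ and using \eqref{k2two} in place of \eqref{k2one}.

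The conceptual input is entirely contained in Theorem~\ref{main1} and Corollary~\ref{cor1}; what remains is bookkeeping, and the main obstacle is the sheer size of the computation. Determining the twelve rational expansion coefficients requires carrying the $q$-series out to a good number of terms, and it is the propagation of the polynomial-in-$n$ coefficients through the substitution of \eqref{k2one}--\eqref{k2two} that produces the unwieldy denominators in \eqref{k3onebis}--\eqref{k3twobis}. As a sanity check one should verify the final formulas numerically against directly computed values of $\ov{N}_6(n)$, $\ov{N}_2(n)$ and the crank moments for several small $n$.
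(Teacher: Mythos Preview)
Your proposal is correct and follows essentially the same approach as the paper: verify that the twelve functions in $\ov{\cal{C}}_3\cup\ov{\cal{C}2}_3$ are linearly independent (hence a basis of $\ov{P}\cdot\ov{\cal{W}}_3$), express the $a=6$ cases of parts $(iii)$ and $(iv)$ of Theorem~\ref{main1} in that basis, extract the coefficient of $q^n$, and then use \eqref{k2one}--\eqref{k2two} to eliminate the fourth rank moments. One small descriptive slip: before the substitution step the left-hand side involves only $\ov{N}_6(n)$, $n\ov{N}_4(n)$, $\ov{N}_4(n)$, $n\ov{N}_2(n)$, and $\ov{N}_2(n)$ (there is no $\delta_q^2\ov{R}_2$ in part $(iii)$); the $n^2\ov{N}_2(n)$ term you list arises only after plugging in $n$ times \eqref{k2one}, but this does not affect the validity of your argument.
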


Now for $k=4$, there are $22$ functions in Theorem \ref{main1} and
the dimension of $\ov{P}\cdot\ov{\cal{W}}_k$ is $21$.  This
implies a relation among these $22$ functions.  If we would like
relations wherein only one type of rank moment occurs then we may
combine the function
$$F=F(q) :=
q(q;q)_{\infty}^6\left(q^2;q^2\right)_{\infty}^9 := \sum_{n \geq
1} a_F(n)q^n
$$
with the functions in $\ov{\cal{C}}_4$ and $\ov{\cal{C}2}_4$ to
get a basis for $\ov{P} \cdot \ov{\cal{W}}_4$.  (The fact that $F$
is in this space follows from the fact that
$q(q;q)_{\infty}^8\left(q^2;q^2\right)_{\infty}^8$ is a cusp form
of weight $8$ on $\Gamma_0(2)$).  Then each of the functions in
$(iii)$ and $(iv)$ of Theorem \ref{main1} may expressed in terms
of this basis.  We display the relation for the case of
$\ov{N}_k(n)$, again using results above to eliminate the $4$th
and $6$th rank moments in favor of the $2$nd.


\begin{corollary} \label{cor3}
\begin{equation} \label{k4one}
\begin{aligned}
& \ov{N}_8(n) = \left(-17-112n-224n^2 - 256n^3\right)\ov{N}_2(n) +
\left(\frac{15815680}{70153149}\right)a_F(n)\\ &+
\left(\frac{-3743678558672}{83365325395} +
\frac{-141447890442736n}{1750671833295} +
\frac{-135995781048448n^2}{1750671833295} + \frac{9269071448192n^3}{583557277765}\right)\ov{M}_2(n) \\
&+ \left(\frac{772193500416}{16673065079} +
\frac{9412063348224n}{116711455553}  +
\frac{9106119501824n^2}{116711455553}\right)\ov{M}_4(n) \\ &+
\left(\frac{-75923065344}{7578665945}  +
\frac{-737849634816n}{83365325395}\right)\ov{M}_6(n)
+ \left(\frac{2715648}{2125853}\right)\ov{M}_8(n) \\
&+ \left(\frac{4640559869932}{83365325395} +
\frac{260410320833296n}{1750671833295} +
\frac{345677277049024n^2}{1750671833295} + \frac{50935374262656n^3}{583557277765}\right)\ov{M2}_2(n) \\
&+ \left(\frac{-1173668372016}{16673065079} +
\frac{-2419446071808n}{16673065079}+
\frac{-2390306267136n^2}{16673065079}\right)\ov{M2}_4(n) \\ &+
\left(\frac{130253841984}{7578665945} +
\frac{1671243657216n}{83365325395}\right)\ov{M2}_6(n)
+ \left(\frac{-4858240}{2125853}\right)\ov{M2}_8(n). \\
\end{aligned}
\end{equation}
\end{corollary}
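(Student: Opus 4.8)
The plan is to carry out, in the $k=4$ case, the same linear-algebra argument that produced Corollaries \ref{cor1} and \ref{cor2}. First I would record the relevant dimension count: by part $(b)$ of Proposition 1 in \cite{kz} together with the dimension formulas for modular forms on $\Gamma_0(2)$, one has $\dim\bigl(\ov{P}\cdot\ov{\cal{W}}_4\bigr)=\dim\ov{\cal{W}}_4=21$. The $20$ functions comprising $\ov{\cal{C}}_4\cup\ov{\cal{C}2}_4$ all lie in $\ov{P}\cdot\ov{\cal{W}}_4$ by parts $(i)$ and $(ii)$ of Theorem \ref{main1}, while $F=q(q;q)_{\infty}^6(q^2;q^2)_{\infty}^9$ lies there because $F/\ov{P}=q(q;q)_{\infty}^8(q^2;q^2)_{\infty}^8$ is a cusp form of weight $8$ on $\Gamma_0(2)$. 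So the assertion that $\{F\}\cup\ov{\cal{C}}_4\cup\ov{\cal{C}2}_4$ is a basis is a statement about a $21$-dimensional space.

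To verify the basis claim I would divide out $\ov{P}$ and work inside $\ov{\cal{W}}_4$, writing each element as a polynomial of degree at most $3$ in $E_2$ whose coefficients are modular forms of weight at most $8$ on $\Gamma_0(2)$. By the valence formula for $\Gamma_0(2)$ (equivalently, a Sturm-type bound), a nonzero element of $\ov{\cal{W}}_4$ cannot vanish at the cusp $\infty$ to order exceeding an explicit $N_0$; hence any linear relation among the $21$ candidate functions is already detected by their first $N_0+1$ Fourier coefficients. Expanding these coefficients (with MAPLE, say) and computing the rank of the resulting matrix confirms linear independence, so the $21$ functions form a basis of $\ov{P}\cdot\ov{\cal{W}}_4$.

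Next I would express the rank-moment combination. By part $(iii)$ of Theorem \ref{main1} with $a=2k=8$, the function displayed there --- call it $L$, so that $L=(a^2-3a+2)\ov{R}_a+2\sum_i\binom{a}{2i}(3^{2i}-2^{2i}-1)\delta_q\ov{R}_{a-2i}+\sum_i(\cdots)\ov{R}_{a-2i}$ with $a=8$ --- lies in $\ov{P}\cdot\ov{\cal{W}}_4$, hence is a unique rational linear combination of the $21$ basis elements, whose coefficients are determined by matching finitely many Fourier coefficients. Since $a^2-3a+2=42\ne 0$, I can solve for $\ov{R}_8$. Extracting the coefficient of $q^n$ and using that this coefficient equals $n^m\ov{M}_{2j}(n)$ in $\delta_q^m(\ov{C}_{2j})$, equals $n\,\ov{N}_j(n)$ in $\delta_q\ov{R}_j$, and equals $a_F(n)$ in $F$, I obtain $\ov{N}_8(n)$ as a $\mathbb{Q}[n]$-linear combination of $\ov{N}_6(n)$, $\ov{N}_4(n)$, $\ov{N}_2(n)$, the crank moments $\ov{M}_{2j}(n)$ and $\ov{M2}_{2j}(n)$ for $1\le j\le 4$, and $a_F(n)$.

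Finally I would eliminate the higher rank moments: substitute \eqref{k2one} for $\ov{N}_4(n)$ and \eqref{k3onebis} for $\ov{N}_6(n)$ (and likewise for $n\,\ov{N}_4(n)$ and $n\,\ov{N}_6(n)$), and collect terms. Since neither \eqref{k2one} nor \eqref{k3onebis} involves $a_F(n)$ or the eighth crank moments, those terms survive untouched, and simplification yields precisely \eqref{k4one}. I expect the main obstacle to be computational bookkeeping rather than anything conceptual: the $21$-dimensional linear system and the auxiliary cusp form $F$ force coefficients with large denominators, so the elimination is lengthy but mechanical. The only genuinely non-routine ingredient is the valence/Sturm bound, which is what justifies pinning down an element of the finite-dimensional space $\ov{\cal{W}}_4$ --- and hence verifying both the basis property and the coefficients of $L$ --- from finitely many of its Fourier coefficients.
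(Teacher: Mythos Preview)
Your approach is essentially the same as the paper's. The paper likewise takes $\{F\}\cup\ov{\cal{C}}_4\cup\ov{\cal{C}2}_4$ as a basis for the $21$-dimensional space $\ov{P}\cdot\ov{\cal{W}}_4$, expresses the function in part $(iii)$ of Theorem \ref{main1} (with $a=8$) in terms of that basis, and then eliminates $\ov{N}_4(n)$ and $\ov{N}_6(n)$ via \eqref{k2one} and \eqref{k3onebis}; your remarks on the Sturm-type bound merely make explicit what the paper leaves to computer verification.
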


When $k\geq 5$, the number of functions in Theorem \ref{main1} is
smaller than the dimension of $\ov{P}\cdot\ov{\cal{W}}_k$.
Presumably this could be remedied by adding functions of the form
$\ov{P}f$, where $f$ is a cusp form, along with their $\delta_q$-
derivatives.  We shall not pursue this here.


\section{Proof of Theorems \ref{main2} and \ref{main3}}
We begin this section by expressing our combinatorial functions in
terms of the second moments $\ov{N}_2(n)$, $\ov{N2}_2(n)$,
$\ov{M}_2(n)$, and $\ov{M2}_2(n)$.
\begin{proposition} \label{prop1}
We have $nov(n) = \frac{1}{2}\ov{M}_2(n)$ and $ov(n) =
\frac{1}{2}\ov{M}_2(n) - \ov{M2}_2(n)$.
\end{proposition}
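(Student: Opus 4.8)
The plan is to prove both identities at the level of generating functions and then read off coefficients of $q^n$.

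First I would establish \eqref{Nov} and \eqref{Ov}. Starting from $\ov{P}=\prod_{j\ge1}\frac{1+q^j}{1-q^j}$, mark a single part size $j$: replacing the factor $\frac{1}{1-q^j}$ by $\frac{1}{1-xq^j}$ makes $x$ record the number of non-overlined copies of $j$, and replacing $1+q^j$ by $1+xq^j$ makes $x$ record whether $j$ occurs overlined. Applying $\frac{\partial}{\partial x}\big|_{x=1}$ to the modified product and then multiplying by $j$ gives the generating function for the total number of non-overlined (resp. overlined) copies of $j$, weighted by $j$, summed over all overpartitions of $n$; these are $\ov{P}\,\frac{jq^j}{1-q^j}$ and $\ov{P}\,\frac{jq^j}{1+q^j}$ respectively. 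Summing over $j\ge1$ yields \eqref{Nov} and \eqref{Ov}.

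Next I would compute the second crank moments as $\ov{C}_2=\delta_z^2\bigl(\ov{C}(z;q)\bigr)\big|_{z=1}$ and $\ov{C2}_2=\delta_z^2\bigl(\ov{C2}(z;q)\bigr)\big|_{z=1}$, using \eqref{Cofzq} and \eqref{calCofzq}. For the first, write $\ov{C}(z;q)=(q^2;q^2)_\infty\,e^{-L(z)}$ with $L(z):=\sum_{j\ge1}\bigl(\log(1-zq^j)+\log(1-q^j/z)\bigr)$. Since $L(z)=L(1/z)$ we have $\delta_zL\big|_{z=1}=0$, so by the Leibniz rule $\delta_z^2\bigl(\ov{C}(z;q)\bigr)\big|_{z=1}=-\ov{C}(1;q)\,\delta_z^2L\big|_{z=1}$. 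A short calculation using $(q^2;q^2)_\infty=(q;q)_\infty(-q;q)_\infty$ gives $\ov{C}(1;q)=\ov{P}$, and a logarithmic differentiation of the type already carried out in the proof of Theorem \ref{main1} gives $\delta_z^2L\big|_{z=1}=-2\sum_{j\ge1}\frac{q^j}{(1-q^j)^2}=-2\sum_{j\ge1}\frac{jq^j}{1-q^j}$. Hence $\ov{C}_2=2\,\ov{P}\sum_{j\ge1}\frac{jq^j}{1-q^j}=2\,Nov(q)$ by \eqref{Nov}, and comparing coefficients of $q^n$ gives $nov(n)=\tfrac12\ov{M}_2(n)$. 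The identical computation applied to \eqref{calCofzq}, whose only $z$-dependence is through $(zq^2;q^2)_\infty(q^2/z;q^2)_\infty$, gives $\ov{C2}(1;q)=\ov{P}$ (now using $(q;q^2)_\infty(q^2;q^2)_\infty=(q;q)_\infty$) and $\ov{C2}_2=2\,\ov{P}\sum_{j\ge1}\frac{jq^{2j}}{1-q^{2j}}$.

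Finally I would connect $Ov$ to these two sums through the partial fraction identity $\frac{q^n}{1+q^n}=\frac{q^n}{1-q^n}-\frac{2q^{2n}}{1-q^{2n}}$. Substituting it into \eqref{Ov} gives $Ov(q)=\ov{P}\sum_{n\ge1}\frac{nq^n}{1-q^n}-2\,\ov{P}\sum_{n\ge1}\frac{nq^{2n}}{1-q^{2n}}=Nov(q)-\ov{C2}_2=\tfrac12\ov{C}_2-\ov{C2}_2$, and comparing coefficients of $q^n$ yields $ov(n)=\tfrac12\ov{M}_2(n)-\ov{M2}_2(n)$. There is no serious obstacle here: the whole argument is elementary product and logarithmic-derivative bookkeeping. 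The points that require care are checking the specializations $\ov{C}(1;q)=\ov{C2}(1;q)=\ov{P}$, making sure the first-derivative terms in $\delta_z^2L$ cancel (they must, by the $z\mapsto1/z$ symmetry, and this is exactly where the formula could go wrong), and recognizing that the $\frac{1}{1+q^n}$ partial fraction is precisely what converts $Ov$ into the combination $\tfrac12\ov{C}_2-\ov{C2}_2$.
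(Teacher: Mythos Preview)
Your proof is correct and complete. It differs from the paper's argument mainly in the second identity. The paper proves $nov(n)=\tfrac12\ov{M}_2(n)$ by quoting Dyson's identity $M_2(n)=2np(n)$ and then observing that multiplying $\sum 2np(n)q^n$ by $(-q;q)_\infty$ turns the weight $n$ on a partition into the sum of the non-overlined parts of an overpartition; you instead compute $\delta_z^2\ov{C}(z;q)\big|_{z=1}$ directly by logarithmic differentiation and match it with your derivation of \eqref{Nov}. For $ov(n)$ the paper gives $\ov{M2}_2(n)$ a combinatorial meaning as $enov(n)$, the total of \emph{even} non-overlined parts, and then invokes Euler's odd/distinct identity to conclude $nov(n)-enov(n)=ov(n)$; your route avoids this by writing $\ov{C2}_2$ explicitly as $2\ov{P}\sum_{j\ge1}\frac{jq^{2j}}{1-q^{2j}}$ and using the partial-fraction identity $\frac{q^j}{1+q^j}=\frac{q^j}{1-q^j}-\frac{2q^{2j}}{1-q^{2j}}$ to turn \eqref{Ov} into $\tfrac12\ov{C}_2-\ov{C2}_2$. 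The paper's approach yields the additional combinatorial byproduct $\ov{M2}_2(n)=enov(n)$, while yours is entirely self-contained (no appeal to Dyson's theorem or Euler's bijection) and arguably cleaner for the bare statement.
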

\begin{proof}
Dyson \cite{dyson1} has shown that $M_2(n) = 2np(n)$, where $p(n)$
is the number of partitions of $n$.  Since
$$
\sum_{n \geq 0} M_2(n)q^n = \left. \delta_z^2 C(z;q) \right\lvert_{z=1},
$$
we have that
$$
\begin{aligned}
\sum_{n \geq 0} \ov{M}_2(n)q^n &= (-q;q)_{\infty} \delta_z^2
C(z;q) \lvert_{z=1} \\
&= (-q;q)_{\infty} \sum_{n \geq 0} 2np(n)q^n \\
&= 2\sum_{n \geq 0} nov(n)q^n.
\end{aligned}
$$
Similarly, we find that $\ov{M2}_2(n)$ may be interpreted as
$enov(n)$, where $enov(n)$ denotes the sum, over all
overpartitions of $n$, of the even non-overlined parts.  Using
Euler's identity between the number of partitions of $n$ into odd
parts and the number of partitions of $n$ into distinct parts, we
see that $nov(n) - enov(n) = ov(n)$.
\end{proof}
Note that by applying $\delta_q$ to $P$, we see that
\begin{equation} \label{delqP}
\frac{1}{(q;q)_{\infty}}\sum_{n \geq 0} \frac{nq^n}{(1-q^n)} =
\sum_{n \geq 0} np(n)q^n,
\end{equation}
which gives equations \eqref{Nov} and \eqref{Ov}.

We now turn to the smallest parts functions.
\begin{proposition}
We have $\ov{spt}(n) = \ov{M}_2(n) - \ov{N}_2(n)$ and $\ov{spt2}(n) =
\ov{M2}_2(n) - \ov{N2}_2(n)$.
\end{proposition}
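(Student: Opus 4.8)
The plan is to prove both identities at the level of generating functions; they are the overpartition analogues of Andrews' identity $spt(n)=np(n)-\tfrac12 N_2(n)$, which via Dyson's formula $M_2(n)=2np(n)$ reads $spt(n)=\tfrac12\bigl(M_2(n)-N_2(n)\bigr)$. For the left-hand sides I would sort each overpartition of $n$ by its smallest part $s$: the parts exceeding $s$ form an arbitrary overpartition into parts $\geq s+1$, contributing $(-q^{s+1};q)_\infty/(q^{s+1};q)_\infty$, while the copies of $s$ --- an optional $\ov{s}$ together with $k\geq 0$ unbarred $s$'s, with at least one part equal to $s$ in all --- weighted by the number of occurrences of $s$, contribute $q^s/(1-q^s)^2$ in each of the two cases ``$\ov{s}$ present'' and ``$\ov{s}$ absent''. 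Using $(1-q^s)^{-2}(q^{s+1};q)_\infty^{-1}=(1-q^s)^{-1}(q^s;q)_\infty^{-1}$ this gives
$$
\sum_{n\geq 0}\ov{spt}(n)q^n = 2\sum_{s\geq 1}\frac{q^s(-q^{s+1};q)_\infty}{(1-q^s)(q^s;q)_\infty},
$$
and the same sum with $s$ restricted to even values for $\sum_{n\geq 0}\ov{spt2}(n)q^n$; the factor $2$ here is exactly what accounts for the absence of the $\tfrac12$ appearing in the classical identity.

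For the right-hand sides, recall from the proof of Proposition \ref{prop1} that $\sum_n\ov{M}_2(n)q^n=2\ov{P}\sum_{n\geq 1}nq^n/(1-q^n)$ and $\sum_n\ov{M2}_2(n)q^n=2\ov{P}\sum_{n\geq 1}nq^{2n}/(1-q^{2n})$. For the rank moments I would start from the two-variable rank generating functions $\ov{R}(z;q)$ and $\ov{R2}(z;q)$, for which Appell--Lerch/Hecke-type expansions are available (see \cite{Br-Lo-O}), and extract $\sum_n\ov{N}_2(n)q^n=\delta_z^2\ov{R}(z;q)|_{z=1}$ and $\sum_n\ov{N2}_2(n)q^n=\delta_z^2\ov{R2}(z;q)|_{z=1}$; each comes out as $\ov{P}$ times a Lambert-type series in $q$.

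The heart of the matter is then to check the two $q$-series identities $\sum_n\ov{spt}(n)q^n=\sum_n\ov{M}_2(n)q^n-\sum_n\ov{N}_2(n)q^n$ and its $M2$-analogue. Exactly as in Andrews' proof, this amounts to rewriting the Lambert series for $\ov{N}_2$ (resp. $\ov{N2}_2$) by interchanging the order of summation and using partial fractions, so that it telescopes against $2\ov{P}\sum_{n\geq 1}nq^n/(1-q^n)$ (resp. its $q\mapsto q^2$ analogue) into the ``smallest part'' series of the first paragraph. I expect this final rearrangement, together with putting $\ov{N2}_2$ into a usable shape, to be the main obstacle: the $M2$-rank generating function is the less standard of the two, and the $q$-series manipulation --- though elementary --- is already the genuinely nontrivial point in the classical case. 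A cleaner but more delicate alternative would be to adapt the Andrews--Garvan--Liang combinatorial reading of $M_2(n)-N_2(n)$ as a weighted count of vector-partition objects and to construct a direct bijection with ``marked'' overpartitions (marking the occurrences of the smallest part), which sidesteps the $q$-series work at the cost of careful bookkeeping of the overlined parts.
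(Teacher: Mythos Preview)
Your plan is correct and would succeed if carried out, but it takes a longer route than the paper. The paper's proof is essentially two lines: it quotes identities \eqref{this} and \eqref{this2} from \cite{Br-Lo-O}, which already express $\sum_n \ov{spt}(n)q^n$ and $\sum_n \ov{spt2}(n)q^n$ as $2\ov{P}\sum_{n\geq 1} nq^n/(1-q^n) - \sum_n \ov{N}_2(n)q^n$ and its $q\mapsto q^2$ analogue, and then simply recognizes the Lambert-series term as $\sum_n \ov{M}_2(n)q^n$ (resp.\ $\sum_n \ov{M2}_2(n)q^n$) via Proposition~\ref{prop1}. Your approach instead re-derives \eqref{this} and \eqref{this2} from scratch: your combinatorial expression for $\sum_n \ov{spt}(n)q^n$ is correct, and the partial-fractions/telescoping manipulation you sketch is exactly how such identities are proved (and is presumably what is carried out in \cite{Br-Lo-O} itself). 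The gain is a more self-contained argument that does not import the finished identities as a black box; the cost is a page or so of genuine $q$-series work --- which you rightly flag as the main obstacle --- that the paper avoids entirely by citation. Note also that you still invoke \cite{Br-Lo-O} for the Appell--Lerch expansions of $\ov{R}(z;q)$ and $\ov{R2}(z;q)$, so the dependence on that reference is pushed back one step rather than eliminated.
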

\begin{proof}
By the work in \cite{Br-Lo-O}, we find that
\begin{equation} \label{this}
\sum_{n=0}^{\infty}
\ov{spt}(n)q^n = \frac{(-q;q)_{\infty}}{(q;q)_{\infty}} \sum_{n \geq
1} \frac{2nq^n}{(1-q^n)} - \sum_{n=0}^{\infty} \ov{N}_2(n)q^n,
\end{equation}
and
\begin{equation} \label{this2}
\sum_{n=0}^{\infty}
\ov{spt2}(n)q^n = \frac{(-q;q)_{\infty}}{(q;q)_{\infty}} \sum_{n \geq
1} \frac{2nq^{2n}}{(1-q^{2n})} - \sum_{n=0}^{\infty} \ov{N2}_2(n)q^n.
\end{equation}
Combining \eqref{delqP} with \eqref{this}, \eqref{this2}, and the proof of Proposition
\ref{prop1} finishes the proof.
\end{proof}

We now prove the congruences in Theorems \ref{main2} and
\ref{main3}.

\begin{proof}[Proof of Theorem \ref{main2}]

For \eqref{cong2}, we simply multiply \eqref{k3onebis} by 5 and
reduce modulo $5$.  The result is
\begin{equation} \label{withh}
\left(2n^2+n+2\right)\ov{M}_2(n) + \left(n^2+4n+3\right)\ov{M2}_2(n) \equiv 0 \pmod{5},
\end{equation}
which implies the desired congruence.

For \eqref{cong3}, we first multiply \eqref{k2one} by $7$ and
reduce modulo $7$.  The result is
\begin{equation} \label{above}
(2+6n)\ov{M}_2(n) + 6\ov{M}_4(n) + (2+4n)\ov{M2}_2(n) \equiv 0
\pmod{7}.
\end{equation}
Next we take the set $\ov{\cal{C}}_4 \cup \ov{\cal{C}2}_4 \cup
\{F\}$ and replace $\delta_q\ov{C}_4$ by $\ov{C}_2\ov{C}_4/\ov{P}$
and $\delta_q^2\ov{C}_4$ by $\ov{C}_2\ov{C}_6/\ov{P}$.  This turns
out to be a basis for $\ov{P}\cdot\ov{\cal{W}}_4$.  Expressing the
function in part $(iii)$ of Theorem \ref{main1} in terms of this
basis, multiplying by $7$ and reducing modulo $7$ gives
$$
\left(4+6n+2n^2+3n^3\right)\ov{M}_2(n) + 6\ov{M}_4(n) +
\left(4n+5n^2+n^3\right)\ov{M2}_2(n) \equiv 0 \pmod{7}.
$$
Using \eqref{above} to substitute for $\ov{M}_4(n)$ gives
$$
\left(2n^3+3n^2+3\right)\overline{M2}_2(n) \equiv
\left(n^3+3n^2+3\right)\ov{M}_2(n) \pmod 7,
$$
and the congruence \eqref{cong3} follows.
\end{proof}

\begin{proof}[Proof of Theorem \ref{main3}]

First reduce \eqref{k2two} modulo $3$ to obtain
$$
(2n+2)\overline{N2}_2(n) \equiv (2n+2)\ov{M2}_2(n) \pmod{3}.
$$
Since $\ov{spt2}(n) = \ov{M2}_2(n) - \ov{N2}_2(n)$, we have
\eqref{cong4}.

Reducing \eqref{k2one} modulo $3$ we obtain
$$
(2n+2)\overline{N}_2(n) \equiv (2n+2)\ov{M2}_2(n) \pmod{3}.
$$
Combined with the fact that $nov(3n) \equiv - ov(3n) \pmod{3}$
(since $nov(n) + ov(n) = n\ov{p}(n)$) and the fact that
$\ov{spt}(n) = \ov{M}_2(n) - \ov{N}_2(n)$, we have \eqref{cong5}.

Next we perform the same computation used to obtain
\eqref{k3twobis}, except that we replace $\delta_q^2\ov{C2}_2$ by
$\ov{C}_2\ov{C}_4/\ov{P}$.  Reducing the result modulo $5$ gives
\begin{equation} \label{with2}
\left(1-n^2\right)\ov{N2}_2(n) \equiv \left(2n^2+3\right)\ov{M}_2(n) \pmod{5}.
\end{equation}
Combining this with \eqref{withh} when $n$ is replaced by $5n+3$
gives \eqref{cong6}.

Finally we perform the same calculation used to obtain
\eqref{k3onebis}, again replacing $\delta_q^2\ov{C2}_2$ by
$\ov{C}_2\ov{C}_4/\ov{P}$.  Reducing the result modulo $5$ gives
$$
\left(3+2n^2\right)\overline{N}_2(n) \equiv \left(n+4n^2\right)\ov{M}_2(n) +
(4+4n)\ov{M2}_2(n) \pmod 5.
$$
Combining this with \eqref{with2} and \eqref{withh} when $n$ is
replaced by $5n$, together with the fact that $\ov{spt1}(n) =
\ov{M}_2(n) - \ov{N}_2(n) - \ov{M2}_2(n) + \ov{N2}_2(n)$, gives
\eqref{cong7}.

\end{proof}


\section{Proof of Theorem \ref{main4} and Corollary \ref{HeckeTheorem}}
\begin{proof}[Proof of Theorem \ref{main4}] \label{HeckeCong}
Let $\ov{Spt1}=\ov{Spt1}(q)$ denote the generating function for
$\ov{spt1}(n)$ and let $\ov{f}=\ov{f}(q)$ denote the generating function
for $\ov{\alpha}(n)$.  Since by  \eqref{cong4} and \eqref{cong5}
we have
$$
\overline{spt1}(3n) \equiv 0 \pmod 3,
$$
to prove Theorem \ref{main4} it is enough to show that
$$
G=G(q):= \leg{  \bullet }{   3} \otimes \left(4 \overline{Spt1} -
\leg{   \bullet}{   3 }  \otimes \overline{f}  \right) \equiv 0
\pmod 3,
$$
where  for a character $\chi$ and a $q$-series $g$,  $\chi \otimes
g$ denotes the twist of $g$ by
 $\chi$, i.e., the $n$th Fourier coefficient of $g$ is multiplied by $\chi(n)$.
Let us next recall the definition of a harmonic   Maass form.
If $k\in \frac{1}{2}\Z\setminus
\Z$, then the weight $k$ hyperbolic
Laplacian is given by
\begin{equation}\label{laplacian}
\Delta_k := -y^2\left( \frac{\partial^2}{\partial x^2} +
\frac{\partial^2}{\partial y^2}\right) + iky\left(
\frac{\partial}{\partial x}+i \frac{\partial}{\partial y}\right).
\end{equation}
If $v$ is odd, then define $\epsilon_v$ by
\begin{equation}
\epsilon_v:=\begin{cases} 1 \ \ \ \ &{\text {\rm if}}\ v\equiv
1\pmod 4,\\
i \ \ \ \ &{\text {\rm if}}\ v\equiv 3\pmod 4. \end{cases}
\end{equation}
Moreover we let $\chi$ be a Dirichlet character.
 A {\it harmonic Maass form of weight $k$ with Nebentypus $\chi$ on a subgroup
$\Gamma \subset \Gamma_0(4)$} is any smooth function $g:\H\to \C$
satisfying the following:
\begin{enumerate}
\item For all $A= \left(\begin{smallmatrix}a&b\\c&d
\end{smallmatrix} \right)\in \Gamma$ and all $\tau \in \H$, we
have
\begin{displaymath}
g(A\tau)= \leg{c}{d}^{2k}\epsilon_d^{-2k} \chi(d)\,(c\tau+d)^{k}\ g(\tau).
\end{displaymath}
\item We  have that $\Delta_k g=0$.
\item The function $g(\tau)$ has
at most linear exponential growth at all the cusps of $\Gamma$.
\end{enumerate}
Define the integral
\begin{equation*}
  \overline{NH}(\tau):= \frac{1}{2\sqrt{2}\pi i} \int_{-\bar \tau}^{i \infty}
   \frac{\eta^2(u)}{\eta(2u)(-i (\tau+u))^{\frac32}}
  \, d u,
\end{equation*}
\noindent where $\eta(\tau)$ is Dedekind's eta function.
Combining (\ref{this}) and (\ref{this2}) with Theorems 4.1 and  5.1 of
 \cite{Br-Lo-O}, we may conclude that
$$
\overline{\mathcal{M}}_1(\tau):= \overline{Spt1} +\frac{1}{12}
\frac{\eta(2\tau) }{\eta^2(\tau) } E_2\left(\tau \right) - \frac{1}{3}
\frac{\eta(2\tau) }{\eta^2(\tau) } E_2\left(2\tau\right)
 + \overline{NH}(\tau)
$$
is a weight $\frac32$  harmonic Maass form on $\Gamma_0(16)$.
From \cite{Br-Lo2} we have that
\begin{equation*}
\overline{\mathcal{M}}(\tau) := \overline{f}    -
4\overline{NH}(\tau)
\end{equation*}
is also a harmonic Maass form of weight $\frac32$ on
$\Gamma_0(16)$.

Turning back to the proof of  Theorem \ref{main4},
 it is not hard to check that
$$
G \equiv  H
  \pmod 3,
$$
where
\begin{multline*}
H=H(q):=
\leg{  \bullet }{   3} \otimes
\left(4
\left(
\overline{Spt1} +\frac{1}{12}
 \frac{\eta(2\tau) }{   \eta^2(\tau)}
E_2\left(\tau \right) - \frac{1}{3} \frac{ \eta(2 \tau)   }{
\eta^2(\tau)}
 E_2\left( 2\tau \right) \right)   \right.
\\
\left.
+\frac{\eta(2 \tau)}{\eta^2(\tau)}
+   \frac{ \eta(2 \tau)   }{  3 \eta^2(\tau)}
  \left(
 -
E_4\left(2\tau \right)  + E_4(\tau) \right) - \leg{   \bullet}{   3 }
\otimes \overline{f}
  \right).
\end{multline*}
As in the proof of Proposition 4.1 of \cite{BO}, one can show that
 the non-holomorphic parts of
$\overline{\mathcal{M}}_1(\tau)$ and $\overline{ \mathcal{M}}(\tau)$ are
supported on negative squares.
This easily yields that $H$ is a linear combination of
weakly holomorphic modular forms, i.e.~ meromorphic modular forms
whose poles are supported in the cusps,
 of weights  $-\frac12$,  $\frac32$, and $\frac72$ on
$\Gamma_0(144)$.
We next place bounds on the orders of vanishing of $H$ in the
cusps. Clearly $E_4(\tau)$  and $E_4(2\tau)$ have no poles. Moreover from
the transformation law of $\overline{f}$ (see \cite{Br-Lo2}) it follows that
$\overline{f}$ also has no poles. Using this one can show
 that poles can only arise from $\frac{ \eta(2\tau)}{ \eta^2(\tau)}$ and thus
 are of the form $\frac{a}{c}$ with $c$ odd. Using properties of twists,
we can bound the orders of vanishing of $H$ at $\frac{a}{c}$ with
$c$ odd as follows:
 If $9|c$, its order can be bounded by $-\frac{1}{16}$, if $3\parallel
 c$ its order is bounded by $-\frac{9}{16}$, and if $3 \nmid c$ the order is bounded by $-\frac{1}{144}$.
 This now easily yields that
$\frac{\eta^{ 18 }(\tau) }{\eta^9(2\tau) } G$ is the sum of three
holomorphic modular forms of weight $4$, $6$, and $8$,
respectively. Using the fact that $\frac{\eta^6(\tau) }{
\eta^2(3\tau)}$ is a holomorphic weight $2$ modular form on
$\Gamma_0(9)$ satisfying
$$
\frac{\eta^6(\tau) }{ \eta^2(3\tau)} \equiv 1 \pmod 3,
$$
we can show that $G$ is congruent  to a holomorphic
modular form of weight $8$ on $\Gamma_0(144)$ modulo $3$. Sturm's Theorem now  gives that this
form is  congruent to $0$ if the first
$$
\left[  \frac{8}{12} \left[  \SL_2(\Z) : \Gamma_0(144)   \right]
\right] +1 = 193
$$
coefficients are congruent $0$ modulo $3$. This can be done by
MAPLE.
\end{proof}
Corollary \ref{HeckeTheorem} now follows easily from Theorem \ref{main4} and the following
\begin{proposition} \label{ClassIdentity}
Let $\ell \not= 2$, $3$. Then we have
\begin{equation} \label{overIdentity}
\overline{\alpha} \left(\ell^2 n \right) + \leg{-n}{\ell}
\overline{  \alpha }(n) + \ell\,  \overline{\alpha } \left(
\frac{n}{ \ell^2}  \right) = ( \ell +1) \overline{\alpha}(n).
\end{equation}
\end{proposition}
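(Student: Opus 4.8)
The plan is to deduce \eqref{overIdentity} from the classical Hecke relation for the Hurwitz class number $H$, using the explicit formula \eqref{formula1eq} for $\overline{\alpha}(n)$ together with the elementary relations \eqref{rofn} for $r(n)$. First I would absorb the sign $(-1)^n$: set $\beta(n):=(-1)^n\overline{\alpha}(n)$, so that \eqref{formula1eq} expresses $\beta$ directly in terms of $H$ and $r$. Since $\ell$ is odd, $(-1)^{\ell^2 n}=(-1)^{n/\ell^2}=(-1)^n$ whenever the arguments are integers, so \eqref{overIdentity} is equivalent to
\[
\beta(\ell^2 n)+\leg{-n}{\ell}\beta(n)+\ell\,\beta(n/\ell^2)=(\ell+1)\beta(n),
\]
with the convention that a term whose argument is not a nonnegative integer is $0$.

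Next I would invoke the classical Hurwitz--Kronecker--Eichler relation: for every odd prime $\ell$ and every integer $m\ge 0$,
\[
H(\ell^2 m)+\leg{-m}{\ell}H(m)+\ell\,H(m/\ell^2)=(\ell+1)H(m) \qquad (\star)
\]
(equivalently, Zagier's weight $\tfrac32$ Eisenstein series $\mathcal{H}(\tau)=-\tfrac1{12}+\sum_{m\ge 1}H(m)q^m$ has a completion that is a Hecke eigenform with $T_{\ell^2}$-eigenvalue $\ell+1$, the eigenvalue being forced by the constant term $H(0)=-\tfrac1{12}$). From $(\star)$ and \eqref{rofn} I would then derive the same relation for $r$,
\[
r(\ell^2 m)+\leg{-m}{\ell}r(m)+\ell\,r(m/\ell^2)=(\ell+1)r(m) \qquad (\star\star):
\]
when $m\equiv 1,2\pmod 4$ this is $(\star)$ at $4m$ (using $\leg{4}{\ell}=1$), when $m\equiv 3\pmod 8$ it is $(\star)$ at $m$, when $m\equiv 7\pmod 8$ both sides vanish, and when $4\mid m$ both sides coincide with the corresponding statement at $m/4$, so one descends on $v_2(m)$. (Alternatively, $\Theta^3$ is itself a Hecke eigenform of weight $\tfrac32$ on $\Gamma_0(4)$.)

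Finally I would run a short case analysis on $n$ modulo $8$. Because $\ell$ is odd, $\ell^2\equiv 1\pmod 8$, so $n$, $\ell^2 n$ and $n/\ell^2$ all lie in the same residue class mod $8$, and hence \eqref{formula1eq} represents $\beta$ at all three arguments by the same branch of its definition. In the cases $n\equiv 1,2\pmod 4$, $n\equiv 3\pmod 8$ and $n\equiv 7\pmod 8$, dividing by the common constant ($-4$, $-24$, $-16$) turns the displayed identity for $\beta$ into $(\star)$ at $4n$ (again using $\leg{4}{\ell}=1$ to identify the Legendre symbols) or at $n$. In the remaining case $4\mid n$ we have $\beta(n)=-16H(n)-\tfrac13 r(n/4)$, and the identity splits as $-16$ times $(\star)$ at $n$ plus $-\tfrac13$ times $(\star\star)$ at $n/4$; adding these yields $(\ell+1)\beta(n)$. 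This proves \eqref{overIdentity}; the hypothesis $\ell\neq 3$ is not actually used, since $(\star)$ holds for all odd primes, and is imposed only because it is the range needed in Corollary \ref{HeckeTheorem}.

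The only substantive ingredient is the classical relation $(\star)$; everything else is bookkeeping. The steps demanding the most care are the matching of quadratic symbols, such as $\leg{-4n}{\ell}=\leg{-n}{\ell}$, and the descent establishing $(\star\star)$ when a high power of $2$ divides the argument; I would also verify the boundary value $n=0$ directly, where $\overline{\alpha}(0)=1$, $H(0)=-\tfrac1{12}$, $r(0)=1$ and the identity reads $1+0+\ell=\ell+1$.
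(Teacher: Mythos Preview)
Your proof is correct but follows a genuinely different route from the paper's. The paper argues directly that $\overline{f}$ is a Hecke eigenform: it sets $g_\ell := \overline{f}\,|\,T_{\ell^2} - (\ell+1)\overline{f}$, uses the harmonic Maass form $\overline{\mathcal{M}}(\tau) = \overline{f} - 4\overline{NH}(\tau)$ together with the Bruinier--Ono result and the fact that $\eta^2(\tau)/\eta(2\tau)$ is a Hecke eigenform with eigenvalue $1+1/\ell$ to deduce that $g_\ell$ is weakly holomorphic of weight $\tfrac32$ on $\Gamma_0(16)$, observes it is actually holomorphic because the coefficients of $\overline{f}$ grow only polynomially, and then applies the valence formula to reduce to checking \eqref{overIdentity} for $n=0,1,2,3$, which it does via \eqref{formula1eq} and Cohen's formula. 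You instead feed \eqref{formula1eq} in from the outset, reducing the proposition entirely to the classical Hecke relation $(\star)$ for the Hurwitz class number (equivalently, that Zagier's weight $\tfrac32$ Eisenstein series is a $T_{\ell^2}$-eigenform with eigenvalue $\ell+1$) and the companion relation $(\star\star)$ for $r(n)$, followed by a short case split modulo $8$. Your argument is more elementary in that it avoids the harmonic Maass form machinery and the valence bound, and it makes transparent that \eqref{overIdentity} is nothing more than the classical class-number Hecke relation transported through \eqref{formula1eq}; the paper's argument, by contrast, establishes the eigenform property of $\overline{f}$ intrinsically and only appeals to \eqref{formula1eq} for the finite check, so it would still go through even without the full case-by-case formula.
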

\begin{proof}
To prove (\ref{overIdentity}), we have to show that
$$
g_ {\ell}(\tau) :=
 \overline{f}| T_{  \ell^2   }
- ( \ell +1)
 \overline{f} =0,
$$
where $T_{\ell}$ is the usual half-integral  weight
Hecke-operator. Using that $  \frac{ \eta^2(\tau)}{ \eta(2\tau)}$ is a
Hecke eigenform with eigenvalue $1+\frac{1}{\ell}$, one obtains from
\cite{BrO} that $g_{\ell}(\tau)$ is a weakly holomorphic modular form
of weight $\frac32$ on $\Gamma_0(16)$. Since the coefficients of
$\bar{f}$ have only polynomial growth it is a holomorphic form.
The valence formula now gives that $g_{\ell}=0$ if its first $4$
Fourier coefficients equal $0$.
 Thus to finish the proof, we have to show that (\ref{overIdentity}) is true for $0 \leq n \leq 3$.
 For $n=0$ this claim is trivial.
 For the other cases recall   (\ref{formula1eq}) and (\ref{rofn}).
Moreover we need the fact  \cite{Co} that
 if  $-n = Df^2$, where $D$ is a negative fundamental discriminant,
then
\begin{equation} \label{Cohenformula}
H(n) = \frac{h(D)}{w(D)} \sum_{d \mid f} \mu(d) \left( \frac{D}{d}
\right) \sigma_1(f/d).
\end{equation}
Here $h(D)$ is the class number of $\mathbb{Q}(\sqrt{D})$, $w(D)$
is half the number of units in the ring of integers of
$\mathbb{Q}(\sqrt{D})$, $\sigma_1(n)$ is the sum of the divisors
of $n$, and $\mu(n)$ is the M\"obius function. We only show
(\ref{overIdentity}) for $n=1$ since the   other cases follow
similarly. In this case we have to show that
$$
\overline{\alpha} \left(\ell^2 \right) = 2 \left( \ell+1 -
\leg{-1}{\ell} \right).
$$
Firstly we have from (\ref{formula1eq})  that
$$
\overline{\alpha} \left( \ell^2 \right) =4 H \left(4 \ell^2
\right).
$$
Since $h(-4)=1$ and $\omega(-4)=2$,
 (\ref{Cohenformula}) yields
$$
\overline{\alpha} \left(\ell^2 \right) = 2 \left( \sigma_1( \ell)
- \leg{-1}{\ell} \right) = 2 \left(\ell+1 - \leg{-1}{\ell}
\right),
$$
as claimed.
\end{proof}



\end{document}